%% filename: amsart-template.tex
%% version: 1.1
%% date: 2014/07/24
%%
%% American Mathematical Society
%% Technical Support
%% Publications Technical Group
%% 201 Charles Street
%% Providence, RI 02904
%% USA
%% tel: (401) 455-4080
%%      (800) 321-4267 (USA and Canada only)
%% fax: (401) 331-3842
%% email: tech-support@ams.org
%% 
%% Copyright 2008-2010, 2014 American Mathematical Society.
%% 
%% This work may be distributed and/or modified under the
%% conditions of the LaTeX Project Public License, either version 1.3c
%% of this license or (at your option) any later version.
%% The latest version of this license is in
%%   http://www.latex-project.org/lppl.txt
%% and version 1.3c or later is part of all distributions of LaTeX
%% version 2005/12/01 or later.
%% 
%% This work has the LPPL maintenance status `maintained'.
%% 
%% The Current Maintainer of this work is the American Mathematical
%% Society.
%%
%% ====================================================================

%     AMS-LaTeX v.2 template for use with amsart
%
%     Remove any commented or uncommented macros you do not use.

\documentclass{amsart}
\usepackage{hyperref}

\newtheorem{theorem}{Theorem}[section]
\newtheorem{lemma}[theorem]{Lemma}

\theoremstyle{definition}
\newtheorem{definition}[theorem]{Definition}
\newtheorem{example}[theorem]{Example}

\theoremstyle{remark}

\usepackage{verbatim} %COMENTARIOS
\numberwithin{equation}{section}

\begin{document}
\title{An Approach with Toric Varieties for Singular Learning  Machines.}
%\title{An Approach with Lattice Polytope and Toric Resolution For Statistical Learning in Singular Machines.}

%    Remove any unused author tags.

%    author one information
\author{M.P. Castillo-Villalba}
\address{Program of Computational Genomics, Center for Genomic Sciences UNAM, C.P. 62210, Cuernavaca, Morelos, Mexico.}
%\curraddr{}
\email{mpolovillalba@gmail.com}
%\thanks{}

%    author two information
\author{J.O. Gonz\'{a}lez -Cervantes.}
\address{Department of Mathematics, Superior School of Physic and Mathematics, National Polytechnical Institute, IPN, Zacatenco, CP 07738, Mexico City, Mexico.}
%\curraddr{}
\email{jogc200678@gmail.com}
%\thanks{}

\subjclass[2010]{Primary}

\keywords{Toric variety; toric morphism; lattice polytope; Hilbert basis; learning curves; Singular machines, Kullback distance.}
\ams{AMS classification codes.}

\date{19/Jul/2017.}

%\dedicatory{}

\begin{abstract}
The Computational Algebraic Geometry applied in Algebraic Statistics; are beginning to exploring new branches and applications; in artificial intelligence and others areas. Currently, the development of the mathematics is very extensive and it is difficult to see the immediate application of few theorems in different areas, such as is the case of the Theorem 1 given in \cite{GEwald1993} and proved the middle here. Also this work has the intention to show the Hilbert basis as a powerful tool in data science; and for that reason we compile important results proved in the works by, S. Watanabe \cite{SWatanabe2009}, D. Cox, J. Little and H. Schenck \cite{DCoxLittleSchenck2010}, and G. Ewald \cite{GEwald1993}. In this work we study, first, the fundamental concepts in toric algebraic geometry. The principal contribution of this work is the application of Hilbert basis (as one realization of Theorem 1) for the resolution of singularities with toric varieties, and a background in lattice polytope. In the second part we apply this theorem to problems in statistical learning, principally in a recent area as is the Singular Learning Theory. We define the singular machines and the problem of \textbf{Singular Learning} through the computing of learning curves on these statistical machines. We review and compile results on the work of S. Watanabe in \textbf{Singular Learning Theory}, ref.; \cite{SWatanabe12001}, \cite{SWatanabe42001}, \cite{SWatanabe52001}, we formalize this theory with toric resolution morphism in a theorem proved here (Theorem 6), characterizing these singular machines as toric varieties, and we reproduce results previously published in Singular Statistical Learning in \cite{SWatanabe32001}, \cite{SWatanabe42001}, \cite{SWatanabe72001}.

\end{abstract}

\maketitle
\section{Preliminars.}
The paper is organized as follows. In the first part, we revise a few concepts of convex geometry, the Gordan lemma and separation lemma, as important preliminary results for the subsequent developments as Hilbert basis. In the second section we revise the standard theory of toric algebraic geometry, \cite{BSturmfels1996}, \cite{DCoxLittleSchenck2010}, \cite{GEwald1993}; and make use of the definition of toric variety as an algebraic affine scheme, a definition that will permit the formalizations we show for singular machines and S-systems. In the third section, we enunciate a proof of the Hilbert basis lemma and we compute toric ideals. The value of this result and the computing by means of the Singular program, ref., \cite{DGPS}, enable us to compute  toric ideals as the basis for applications in statistical learning. Furthermore, we define toric morphisms and gluing maps which are of the great importance for the proof of theorem 1. These applications give evidence of the relevance of theorem 1 and its potential benefit to facilitating solutions of problems in engineering.\\
We also give a formal definition of singularity, Ewald \cite{GEwald1993}, and enunciate two theorems for toric resolution; one of them is the theorem of Atiyah-Hironaka; S. Watanabe, \cite{SWatanabe12001}, which is applied for the resolution of singularities due to S. Watanabe, \cite{SWatanabe12001}, \cite{SWatanabe32001}, \cite{SWatanabe62001}, this fact is our motivation to study toric varieties in singular machines and embedding of its parameter space associated, into projective spaces as the theorem 3 proves.\\
In the fourth section, we study and summarize the main concepts of statistical singular learning (identifiable and non identifiable machines, Kullback distance, Fisher matrix information, learning curve and singular machines) with the purpose of making a formal study of singular machines by means of toric resolutions and affine toric varieties where we enunciate and prove part of the Theorem 6 applying the results of the first part. We also see the effect of the singularities in statistical learning and its importance for the performance and training in singular machines, \cite{SWatanabe32001}, which is resolved and studied by means of Theorem 6. We conclude this section with applications for three different statistical machines (perceptron of two layers, mix of binomial distributions, and three layer perceptron) and compute the learning curves by means of Hilbert basis reproducing the results of S. Watanabe, \cite{SWatanabe52001},\cite{SWatanabeYamazaki2002}, \cite{KYamazakiWatanabe2004}.

\section{Background of Convex Combinatorial Geometry.}
All this compilation of definitions and concepts can be consulted in; G. Ewald, \cite{GEwald1993}. 
A set   $ S \, \subset \, \mathbb{R}^{n}\setminus \emptyset $ is a  convex set if   each  $ \alpha \,\in  S $ is a convex  combination of elements of $S$; that is,   $\displaystyle \alpha= \sum_{i=1}^{r} \lambda_{i} \alpha_{i}$, where $ \lambda_{i} \geq \, 0  $  and $\alpha_i \in S$  for all $i =1,\dots, r$, with       $ \displaystyle \sum_{i=1}^{r} \lambda_{i}=1 $. \\
Given  $M \subset \mathbb{R}^{n} $, by \textbf{conv}$ M $ we mean the \textbf{hull convex} of $M$, which is the set of all convex combinations of elements of $M$. Moreover, if $ M $ is a finite set then \textbf{conv}$ M $ is called  a \textbf{convex polytope} or    \textbf{polytope}. \\
 A \textbf{lattice} $ N $ is a free abelian group of finite rank, and if its rank is $ n \in\mathbb N $, then $N$ is isomorphic to $ \mathbb{Z}^{n} $.\\
 Let  $ M $ and $ N $ be two lattice  both of rank $n$, consider $\:  \langle \cdot, \cdot \rangle  :  M \times N   \,  \longrightarrow \, \mathbb{Z} $,  the usual homomorphism of lattice from the inner product in $\mathbb R^n$ and identify to $ N $ with $ Hom_{\mathbb{Z}}(M,\mathbb{Z}) $, then we say 
that $ N $ is the dual lattice of the lattice $ M $, and reciprocally. In any case one denotes $ N=M^{\vee} $, see for more details of this formalism \cite{DCoxLittleSchenck2010}.\\
 Given  $ M $ and $ N $  as dual lattice, denote  $ M_{\mathbb{R}} = M\otimes_{Z}\mathbb{R}$ and $ N_{\mathbb{R}} = N\otimes_{Z}\mathbb{R} $, and  set  $ \sigma =Con( S  ) \subseteq \, M_{\mathbb{R}} $,  for some set $ S \, \subseteq \, M $, then $ \sigma $ is called a \textbf{rational polyhedric cone} or  \textbf{lattice cone}, \cite{DCoxLittleSchenck2010}.\\
 Also a \textbf{lattice cone} is a cone $ \sigma= Con(\alpha_{1},...,\alpha_{r}) \subset \, \mathbb{Z}^{n} $, generated by, $\alpha_{1},...,\alpha_{r}  \,  \in  \mathbb{Z}^{n}$ vectors. If the  coordinates of $\alpha_i$ are relative primes to pairs for each $i=1,\dots,r$, then $\alpha_{1},...,\alpha_{r} $ are called \textbf{primitive vectors}, and the cone $ \sigma $ is called a \textbf{regular cone}. It is well known that if $ \alpha_{1},...,\alpha_{r}  $, are primitives, then there exists $ \alpha_{r+1},...,\alpha_{n} \, \in \mathbb{Z}^{n}  $ such that:
\begin{center}
\textbf{Det}$ (\alpha_{1},...,\alpha_{n})= \pm 1 $.
\end{center}
Also, if the $ \alpha_{1},...,\alpha_{n} \, \in \mathbb{Z}^{n} $ are linearly independent, then the cone $ \sigma $ is a \textbf{simplex cone or simplicial cone.}\\
 A \textbf{face}, $ \tau $ of a cone $ \sigma $ is $ H_{p}\cap \sigma $ where $ H_{p} \subset \, \mathbb{R}^{n} $  is a  tangent hyperplane   to $\sigma$ at  $ p \in \sigma $, it is usually denoted by $ \tau \, \preceq \, \sigma $ it is well known $ \preceq $ is a relation of order.\\
 The   relative interior of $ \sigma $ is   $Relint(\sigma)= \sigma^{\vee}  \setminus    \sigma^{\perp}   $, where
\begin{center}
$\sigma^{\vee} = \lbrace m \in \sigma: \langle m,u \rangle > 0, \, \forall u \, \in \sigma \rbrace $,\\
$ \sigma^{\perp}= \lbrace m \in \mathbb{R}^{n}: \langle m,u \rangle =0, \, \forall u \, \in \sigma  \rbrace$.
\end{center}
  Let  $ P \, \subseteq \, M_{\mathbb{R}} $  be a lattice polytope. A set of cones $ \sum_{F} = \lbrace \sigma_{F} \, \vert \, F \, \preceq \,  P \rbrace$,  is called a \textbf{Fan} if and only if:
	\begin{itemize}
	\item  If  $\tau \preceq \sigma_{F}  $, then $ \tau \preceq \sum_{F}  $  for each  $\sigma_{F} \, \in \, \sum_{F} $,   .\\
\item  If $\tau = \sigma_{1}\cap \sigma_{2} $, then  $ \tau \, \preceq \, \sigma_{1} $ and $ \tau \, \preceq \, \sigma_{2} $ for each $
\sigma_{1}, \sigma_{2}  \, \in \,  \sum_{F}$.
 \end{itemize}
Recalling the following facts:
\begin{enumerate} 
\item Separation Lemma: If $ \sigma_{1} $ and $ \sigma_{2} $ are lattice cones in $ M $, whose intersection $ \tau =\sigma_{1} \cap \sigma_{2} $ is a face of both, then there is exists a hyperplane $ H_{m} $ such that:
\begin{center}
$ \tau =\sigma_{1} \cap H_{m}=\sigma_{2} \cap H_{m}$,
\end{center}
for any $ m \, \in \, Relint(\sigma)= \sigma_{1}^{\vee} \cap (-\sigma_{2})^{\vee}$.\\
\item Lemma. Set $\tau \, \preceq \, \sigma$ and $ m \,\in Relint(\tau^{\perp} \cap \sigma^{\vee} ) \setminus   \lbrace 0 \rbrace $. Then
\begin{center}
$ \tau^{\perp}= \sigma^{\vee} \oplus\{ \lambda (-m)\ \mid \  \lambda \, \in \, \mathbb{R} \}$.
\end{center}
\item Lemma. Let $ \sigma \, \subset \, \mathbb{R}^{n} $ be a lattice cone, then $ \sigma \cap \mathbb{Z}^{n} $ is a \textbf{monoid}.
\item Gordan lemma. Let $ \sigma \, \subset \, \mathbb{R}^{n} $ be a lattice cone,   then the monoid $ \sigma \cap \mathbb{Z}^{n} $ is  finitely generated.
\item Theorem. Let $ \sigma \subset \, \mathbb{R}^{n} $ be a  $ n $ dimensional  lattice cone   with \'{a}pex $ 0 $, i.e.; $0 \preceq \sigma $, and let $ b_{1},...,b_{r} $ be  the inner normal facets of $ \sigma $. Then
\begin{center}
$  \sigma^{\vee}= Con(b_{1},...,b_{r})$.
\end{center}
\end{enumerate}  

\subsection{About toric algebraic geometry.}

 The affine variety $(\mathbb{C}^{*})^{n} =(\mathbb{C}/ \lbrace 0 \rbrace)^{n} $ is a group equipped  with the  complex product of coordinates to pairs and it is  called the \textbf{complex algebraic  n-torus}. A \textbf{torus T} is an affine variety isomorphic to $(\mathbb{C}^{*})^{n}$.\\
 A \textbf{character} of a torus \textbf{T} is a homomorphism of groups, $ \chi  : T\longrightarrow \mathbb{C}^{*} $. For example,  set  $ m=(a_{1},...,a_{n}) \, \in \mathbb{Z}^{n} $, then $ \chi^m  :(\mathbb{C}^{*})^{n} \longrightarrow \mathbb{C}^{*} $   given by:
\begin{center}
$  \chi^{m}(t_{1},...,t_{n})=t_{1}^{a_{1}}*...*t_{n}^{a_{n}} $,
\end{center}
is \textbf{character} of $(\mathbb{C}^{*})^{n} $. Even more, it is well known that any character of $(\mathbb{C}^{*})^{n}$ is given  as above.  
Note that given  a lattice $ M $  and $m\in M$, then it is possible to define a character of $T$ by  $ \chi^{m}:T \longrightarrow \mathbb{C}^{*} $.\\

By an \textbf{uni-parametric subgroup} of a torus \textbf{T}  we mean  a homomorphism of groups $ \lambda : \mathbb{C}  ^{*} \longrightarrow  T$. Given  $ u =(b_{1},...,b_{n}) \, \in \mathbb{Z}^{n}$  define $ \lambda^{u}: \mathbb{C} ^{*} \longrightarrow  (\mathbb{C}^{n})^{*} $ by:
\begin{center}
$ \lambda^{u}(t_{1},...,t_{n})=(t_{1}^{b_{1}}, \dots, t_{n}^{b_{n}})$.
\end{center}
Then $\lambda^{u}$
 is  a uni-parametric subgroup of $(\mathbb{C}^{n})^{*}$ and any uni-parametric subgroup of $(\mathbb{C}^{n})^{*}$ is given in the same form.

One sees that  given a Torus \textbf{T},  there holds that   all uni-parametric subgroups of   $T$ form a free abelian group $ N $ with the same dimension of \textbf{T}. The same fact is obtained for all  characters of  \textbf{T}.

% A canonical isomorphism $ N \bigotimes_{\mathbb{Z}} \mathbb{C^{*}} \simeq T $ is given by $ u \longrightarrow \lambda^{u}(t) $. Therefore  $ T $, is usually denoted by $ T_{N} $. Thus choosing an isomorphism $ T_{N}\simeq (\mathbb{C^{*}})^{n} $, one induces  dual basis and isomorphisms $ M\simeq \mathbb{Z}^{n} $ and $ N \simeq \mathbb{Z}^{n} $, see details for this focus, ref., \cite{DCoxLittleSchenck2010}.\\

 The ring: 
\begin{center}
$  \mathbb{C}[t,t^{-1}]=\mathbb{C}[t_{1},...,t_{n},t_{1}^{-1},...,t_{n}^{-1}] $
\end{center}
is called the \textbf{ ring of Laurent polynomials} and the monomials,
\begin{center}
$  \lambda * t^{a}= \lambda *t^{a_{1}}*...*t^{a_{n}}$ with $ a=(a_{1},...,a_{n}) \, \in \mathbb{Z}^{n}, \, \lambda \, \mathbb{C^{*}}$.
\end{center}
are called  \textbf{Laurent monomials}.\\

The \textbf{support} of  a Laurent polynomial  $ f=\sum_{i=1}^{r}\lambda_{i}t^{a_{i}} $,  is 
\begin{center}
\textbf{supp}(f)$ =\lbrace a_i \, \in \, \mathbb{Z}^{n}: \, \lambda_{i} \neq 0 \rbrace$.
\end{center}

It is known that given $\mathbb{C}[t,t^{-1}] $ as above, and  let $ \sigma $  be lattice cone. Then
\begin{center}
$ R_{\sigma}=\lbrace f \, \in \, \mathbb{C}[t,t^{-1}] :$ \textbf{supp} $ (f) \subset \, \sigma \rbrace$
\end{center}
  is a  generated finitely  monomial  $ \mathbb{C} -$algebra.\\

\begin{definition}\label{Definition.} An \textbf{affine toric variety} is an irreducible affine variety $ X $ containing  a torus $ T_{N}\simeq (\mathbb{C^{*}})^{n}  $ as Zariski open  subset, such that the action of $ T_{N} $ on itself,  is extended to an algebraic action of $ T_{N} $ on $ X $; that is, there exists   a morphism from $ T_{N} \times X$ to $ X $, \cite{DCoxLittleSchenck2010} .\\

Let $\sigma$  be a lattice cone, the    
   \textbf{affine algebraic scheme:}
\begin{center}
$  X_{\sigma}=$ \textbf{Spec} $ (R_{\sigma}) $.
\end{center}
  is called \textbf{abstract toric affine variety} or \textbf{embedding of torus.}\\
\end{definition}	
For example, set $ 0\leq \, r \, \leq n $, and let   $ \sigma \subset \, \mathbb{R}^{n}$ be a lattice cone generated as follows $ \sigma= Con(e_{1},...,e_{r}) $ where  $ e_{i} $ are  canonical vectors in $ \mathbb{R}^{n} $ for  $ i=1,...,r $. Then computing  its dual cone, one has  $ \sigma^{\vee}= Con(e_{1},...,e_{r}, \pm e_{1},...,\pm e_{n}) $,  and  the   affine toric variety is  
\begin{center}
$  X_{\sigma}=$ \textbf{Spec} $ \mathbb{C}[t_{1},...,t_{r},t_{r+1}^{\pm 1},...,t_{r+n}^{\pm 1}] \simeq \mathbb{C}^{r} \times (\mathbb{C}^{*})^{n-r}$.
\end{center}
this example is seen in, \cite{DCoxLittleSchenck2010}. 
\section{Hilbert Basis.}
The theory of \textbf{Hilbert basis} is an important  algebraic geometry tool. The major contribution of this work 
is  the employment of \textbf{Hilbert basis} associated to a monoid $ N \subset \mathbb{Z}^{n} $ to give explicitly a toric resolution  in  a  set  of  new   coordinates to solve a problem  in singular statistical learning in an original way; also see; \cite{BSturmfels1996}.
\\

Let  $ N \simeq \mathbb{Z}^{n}$ be the lattice  and set $ M= N^{\vee} $ its dual lattice. Let  $ \sigma $ be  a lattice cone defined in $ N $ and let $ \sigma^{\vee} $ be its dual cone  in $ M $. Denote   $ S_{\sigma} = \sigma^{\vee} \cap M$ and note that his monoid is finitely generated  (see Gordan lemma).

  \begin{lemma}\label{Lemma 2.1.1.}  (Basis Hilbert). Set $ \sigma \subseteq \, N $, then $\sigma$   is a   n-dimensional  cone  if and only if it is a strongly convex cone; i.e.,  $ \sigma\cap (-\sigma^{\vee}) = \lbrace 0 \rbrace$. In this case the monoid $ S_{\sigma} $ has a finite minimal set of generators \textbf{H} $ \subseteq \, M \simeq \mathbb{Z}^{d}$ and these are minimal, for details of this proof, see  \cite{DCoxLittleOshea2004},  \cite{DCoxLittleSchenck2010}.\end{lemma}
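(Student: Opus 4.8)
The plan is to separate the statement into the equivalence of the first sentence and the Hilbert-basis assertion of the second, and to observe that the first is exactly what guarantees the pointedness of $S_\sigma$ used in the second. For the equivalence I would reason through the lineality space of the dual cone. Recall that the largest linear subspace contained in $\sigma^\vee$ is $(\mathrm{span}\,\sigma)^\perp = \sigma^\perp$, a subspace of dimension $n - \dim\sigma$. Consequently $\sigma^\vee$ fails to be strongly convex precisely when $\dim\sigma < n$, and the condition $\sigma^\vee \cap (-\sigma^\vee) = \{0\}$ holds if and only if $\dim\sigma = n$. This is immediate from the duality recalled in Section 2, in particular from $\sigma^\vee = \mathrm{Con}(b_1,\dots,b_r)$ together with the separation lemma, and it is the form of the equivalence I would record, since it is strong convexity of $\sigma^\vee$ (equivalently $S_\sigma \cap (-S_\sigma) = \{0\}$) that the Hilbert basis requires.

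Assuming this pointedness, I would build the Hilbert basis $\mathbf{H}$ as the set of irreducible elements of $S_\sigma$. By Gordan's lemma $S_\sigma = \sigma^\vee \cap M$ is a finitely generated monoid; fix a finite generating set $A \not\ni 0$. Call $m \in S_\sigma \setminus \{0\}$ \emph{irreducible} if $m = u + v$ with $u,v \in S_\sigma$ forces $u = 0$ or $v = 0$, and let $\mathbf{H}$ be the collection of all irreducibles. For finiteness, writing any irreducible $m$ as $\sum_{a \in A} c_a\, a$ with $c_a \in \mathbb{Z}_{\geq 0}$ shows $\sum_a c_a = 1$, for otherwise $m$ would split into two nonzero summands; hence $\mathbf{H} \subseteq A$ is finite.

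For generation, I would choose a lattice point $u_0$ in the interior of $\sigma$, which is nonempty because $\dim\sigma = n$. Since $\sigma^{\vee\vee} = \sigma$ and $\sigma^\perp = \{0\}$ in this case, the functional $\ell(\cdot) = \langle \cdot, u_0 \rangle$ satisfies $\ell(m) > 0$ for every $m \in S_\sigma \setminus \{0\}$ and takes values in $\mathbb{Z}_{>0}$ there. I can then induct on $\ell(m)$: an irreducible $m$ already lies in $\mathbf{H}$, while a reducible $m = u + v$ has $\ell(u), \ell(v) < \ell(m)$, so the inductive hypothesis writes $u$ and $v$, and hence $m$, as sums of irreducibles. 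Finally, for minimality and uniqueness, every generating set must contain each irreducible, since an irreducible cannot be a sum of two or more nonzero generators; combined with generation, this shows $\mathbf{H}$ is contained in, and therefore equals, every minimal generating set.

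The step I expect to be the main obstacle is generation, namely ensuring the descent on $\ell$ terminates. This does not follow from Gordan's lemma by itself and is exactly where the hypothesis of the first sentence is used: strong convexity of $\sigma^\vee$ supplies the strictly positive integral grading $\ell$ on $S_\sigma \setminus \{0\}$, without which the monoid could contain nonzero units and the reduction could fail to terminate. Once this grading is available, the remaining arguments are routine bookkeeping.
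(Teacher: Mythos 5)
Your proposal is correct, but there is no internal proof to compare it against: the paper states this lemma and immediately defers to the cited references (Sturmfels, Cox--Little--O'Shea, Cox--Little--Schenck), so what you have written is precisely the standard textbook argument that the paper outsources. Two features of your write-up are worth highlighting. First, you quietly repaired the statement itself: as printed, the condition $\sigma \cap (-\sigma^{\vee}) = \lbrace 0 \rbrace$ is vacuous (under the inner-product identification, $x \in \sigma$ and $-x \in \sigma^{\vee}$ force $\langle x, x \rangle \leq 0$, hence $x=0$, for \emph{any} cone), and an equivalence between full dimensionality and strong convexity of $\sigma$ itself would be false (a ray in $\mathbb{R}^{2}$ is strongly convex but not $2$-dimensional). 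Your reading --- that the relevant condition is strong convexity of $\sigma^{\vee}$, established via the lineality-space computation $\sigma^{\vee} \cap (-\sigma^{\vee}) = \sigma^{\perp}$, $\dim \sigma^{\perp} = n - \dim\sigma$ --- is the mathematically correct one, and it is exactly the pointedness of $S_{\sigma}$ that the Hilbert basis requires, so your version is the one consistent with the sources the paper cites. Second, your proof of the Hilbert-basis assertion (irreducibles as candidate basis, finiteness from Gordan's lemma, generation by induction on the strictly positive integral grading $\ell = \langle \cdot \, , u_{0} \rangle$ for an interior lattice point $u_{0}$ of $\sigma$, minimality because every generating set must contain every irreducible) is complete; the only step left implicit is in the finiteness argument, where the claim that $\sum_{a} c_{a} \geq 2$ splits $m$ into two \emph{nonzero} summands needs the observation that a nonempty sum of nonzero elements of $S_{\sigma}$ is nonzero --- but this follows from the same grading $\ell$, so the omission is cosmetic. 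What your self-contained route buys, compared with the paper's bare citation, is that it makes visible exactly where the hypothesis $\dim \sigma = n$ enters (existence of the positive grading), which is also the hypothesis the paper's later applications of this lemma rely on.
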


\begin{definition}\label{Definition 2.1.1} Set $ \omega = (\omega_{1},...,\omega_{n}) \, \in \mathbb{R}^{n} $, and for a polynomial $ f = \sum_{i=1}^n \lambda_{i}t^{a_{i}} $   define its  \textbf{initial form}  $ in_{\omega} (f)$, as the sum over all lambda terms, such that the inner product $ \langle \, \omega, a_{i} \, \rangle $ is maximal. \\
For an ideal $ I $, we mean  the \textbf{initial ideal} as the ideal generated from the initial forms
\begin{center}
$ in_{\omega} (I) =  \langle \, in_{\omega}(f) : f \, \in I \, \rangle$.
\end{center}
\end{definition}
\begin{definition}\label{Definition 2.1.2.} Each  polynomial $ f = \sum_{i =1}^{n}\lambda_{i}t^{a_{i}} $,
in the ring $ \mathbb{C}[t, t^{-1}] $,  is associated to a  convex polytope, or convex hull, in $ \mathbb{R}^{n} $ as follows:
\begin{center}
$  New(f)= $ \textbf{Conv} $ \lbrace a_{i} : i=1,...,m \rbrace \, \subset \mathbb{R}^{n} $.
\end{center}
$ New(f)$ is called the \textbf{Newton polytope} associated to $ f $, in the literature of singular learning machine it is known as the \textbf{exponent space}. But, generally, it is called  the exponent space generated by a Newton polytope; see ref., \cite{BSturmfels1996}.\\
\end{definition}
\begin{lemma}\label{Lemma 2.1.2.} Given $f,g$ two polynomials, then $ New(f*g)= New(f) + New(g) $, where  $*$  is the usual product of polynomials and the sum is the \textbf{Minkowski sum} defined for polytopes, see ref., \cite{BSturmfels1996}. 
\end{lemma}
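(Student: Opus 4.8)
The plan is to prove the two inclusions $New(f*g) \subseteq New(f)+New(g)$ and $New(f)+New(g) \subseteq New(f*g)$ separately. Throughout, write $f = \sum_{i}\lambda_{i}t^{a_{i}}$ and $g = \sum_{j}\mu_{j}t^{b_{j}}$ with all $\lambda_{i},\mu_{j}\neq 0$, so that $\mathrm{supp}(f)=\{a_{i}\}$ and $\mathrm{supp}(g)=\{b_{j}\}$ and $New(f)=\textbf{Conv}(\mathrm{supp}(f))$, $New(g)=\textbf{Conv}(\mathrm{supp}(g))$.

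First I would handle the inclusion $New(f*g)\subseteq New(f)+New(g)$, which is the easy direction. Since $f*g=\sum_{i,j}\lambda_{i}\mu_{j}\,t^{a_{i}+b_{j}}$, after collecting like terms every exponent occurring in $f*g$ is of the form $a_{i}+b_{j}$; hence $\mathrm{supp}(f*g)\subseteq \mathrm{supp}(f)+\mathrm{supp}(g)$. Taking convex hulls on both sides and using the elementary identity $\textbf{Conv}(A+B)=\textbf{Conv}(A)+\textbf{Conv}(B)$ (valid for the Minkowski sum of any two finite sets) gives the desired inclusion.

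For the reverse inclusion $New(f)+New(g)\subseteq New(f*g)$, which is the substantive one, I would use that a polytope is the convex hull of its vertices, so it suffices to show that every vertex $v$ of $New(f)+New(g)$ lies in $\mathrm{supp}(f*g)$, hence in $New(f*g)$. Fix such a vertex $v$ and choose a direction $\omega\in\mathbb{R}^{n}$ for which $\langle\omega,\cdot\rangle$ attains its maximum over $New(f)+New(g)$ uniquely at $v$. By additivity of support functions this maximum equals $M_{f}+M_{g}$, where $M_{f}$ and $M_{g}$ are the maxima of $\langle\omega,\cdot\rangle$ over $New(f)$ and $New(g)$; moreover each of these maxima is attained at a unique point, say $a=a_{i_{0}}$ and $b=b_{j_{0}}$ with $v=a+b$ (uniqueness over the sum forces uniqueness over each summand, since a second maximizer $a'$ on $New(f)$ would make $a'+b\neq v$ a second maximizer on the sum). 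Now the coefficient of $t^{v}$ in $f*g$ is $\sum_{a_{i}+b_{j}=v}\lambda_{i}\mu_{j}$, and any such summand satisfies $\langle\omega,a_{i}\rangle+\langle\omega,b_{j}\rangle=\langle\omega,v\rangle=M_{f}+M_{g}$ with $\langle\omega,a_{i}\rangle\le M_{f}$ and $\langle\omega,b_{j}\rangle\le M_{g}$; both inequalities must be equalities, so by uniqueness $a_{i}=a$ and $b_{j}=b$. The sum therefore collapses to the single term $\lambda_{i_{0}}\mu_{j_{0}}\neq 0$, whence $v\in\mathrm{supp}(f*g)$.

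The hard part is precisely the potential cancellation of monomials when forming $f*g$: in general many interior exponents $a_{i}+b_{j}$ coincide and may cancel, so the naive equality $\mathrm{supp}(f*g)=\mathrm{supp}(f)+\mathrm{supp}(g)$ can fail, and one cannot simply compare supports. What rescues the argument is that vertices of the Minkowski sum never cancel, because each is singled out by a linear functional with a unique maximizer on each factor. This is exactly the mechanism encoded by the initial form $in_{\omega}$ of Definition \ref{Definition 2.1.1}, and the whole proof can be recast as the multiplicativity $in_{\omega}(f*g)=in_{\omega}(f)*in_{\omega}(g)$ for generic $\omega$, from which the equality of the two polytopes follows by comparing their faces in every direction.
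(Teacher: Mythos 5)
Your proof is correct. Note first that the paper does not actually prove this lemma: it is stated with a pointer to \cite{BSturmfels1996}, so there is no internal argument to compare yours against; your write-up supplies what the paper omits, and it is essentially the standard proof found in the cited reference. The easy inclusion $New(f*g)\subseteq New(f)+New(g)$ follows, as you say, from $\mathrm{supp}(f*g)\subseteq \mathrm{supp}(f)+\mathrm{supp}(g)$ together with the identity $\textbf{Conv}(A+B)=\textbf{Conv}(A)+\textbf{Conv}(B)$; the substantive inclusion is obtained by exposing each vertex $v$ of the Minkowski sum with a linear functional whose maximum on each summand is attained at a unique point, so that the coefficient of $t^{v}$ in $f*g$ collapses to the single product $\lambda_{i_{0}}\mu_{j_{0}}\neq 0$ and cannot cancel — this is exactly the right mechanism, and your derivation of uniqueness on each factor from uniqueness on the sum is sound. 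Two small points you use implicitly and could make explicit: (i) the unique maximizer $a$ of $\langle\omega,\cdot\rangle$ on $New(f)$ lies in $\mathrm{supp}(f)$, because a unique maximizer is necessarily a vertex of the polytope and the vertices of $\textbf{Conv}(\mathrm{supp}(f))$ belong to $\mathrm{supp}(f)$; (ii) the conclusion $\lambda_{i_{0}}\mu_{j_{0}}\neq 0$ uses that $\mathbb{C}$ has no zero divisors, which is where the statement would break over a coefficient ring with zero divisors. Your closing remark relating the argument to the initial forms of Definition \ref{Definition 2.1.1} is also accurate, and in fact the multiplicativity $in_{\omega}(f*g)=in_{\omega}(f)*in_{\omega}(g)$ holds for every $\omega$, not just generic ones, by the same no-cancellation argument applied to the top $\omega$-weight part of the product.
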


\begin{proposition}\label{Proposition 2.0.0.} Let  $ I $ be an ideal of the affine toric variety $ X_{\sigma} \, \subseteq \, \mathbb{C}^{n}$. Then define
\begin{center}
$  I(X_{\sigma})= \langle \, t^{l_{+}} - t^{l_{-}} \vert l \, \in L \, \rangle =\langle \, t^{\alpha} - t^{\beta} \vert \alpha - \beta \, \in L, \, \alpha, \, \beta \in \, \mathbb{Z}_{+}^{n} \, \rangle $,
\end{center}
where $ L $ is the kernel of the following morphism $ 0 \longrightarrow L \longrightarrow \mathbb{Z}^{n} \longrightarrow M $ and $ M $ is a monoid  such that $ M \simeq \mathbb{Z}^{d} $. The elements of $ l \, \in L $ satisfies  $ \sum_{i=1}^{n} l_{i}m_{i}=0 $.\end{proposition}

\begin{definition}\label{Definition 2.1.3.} Let be $ L \, \subseteq \, \mathbb{Z}^{n} $, a sub-lattice.\\
\textbf{(a).} The ideal $ I_{L} = \langle \,  t^{\alpha} - t^{\beta} \vert \alpha - \beta \, \in L, \, \alpha, \, \beta \in \, \mathbb{Z}_{+}^{n}  \, \rangle$, is called a \textbf{lattice ideal}.\\
\textbf{(b).} A prime lattice ideal is called a \textbf{toric ideal.}\\\end{definition}
\begin{proposition}\label{Proposition 2.1.0.} An ideal $ I \, \subseteq \, \mathbb{C}[t_{1},...,t_{n}] $ is toric if and only if it is prime and it is generated by binomials.
\end{proposition} One sees the details of the proof, ref. \cite{DCoxLittleSchenck2010}.

\subsection{Toric Morphisms and Gluing Maps.}

\begin{definition}\label{Definition 2.3.0.} Let  $ \Phi :\mathbb{C}^{k} \longrightarrow \Phi(\mathbb{C}^{k}) $ be a monomial map, i.e., each component non zero of $ \Phi $ is a monomial with coordinates in $ \mathbb{C}^{k} $, and let $ X_{\sigma} \hookrightarrow  \mathbb{C}^{k}$ and $ X_{\sigma'} \hookrightarrow  \mathbb{C}^{m}$  be inclusions of toric affine varieties. If $ \Phi (X_{\sigma}) \, \subset \, X_{\sigma'} $, then $ \varphi  := \Phi \vert _{X_{\sigma}}$ is called a \textbf{toric affine morphism} of $ X_{\sigma} $  to $ X_{\sigma'} $. If $ \varphi $ is  bijective and its  inverse map $ \varphi^{-1} :X_{\sigma'} \longrightarrow X_{\sigma} $ is also a toric morphism, then  $ \varphi $ is called an \textbf{affine toric isomorphism} and it is denoted by $ X_{\sigma} \simeq X_{\sigma'} $, \cite{GEwald1993}.
\end{definition}

\begin{proposition}\label{Proposition 2.3.0.} Every toric morphism $ \varphi : X_{\sigma} \longrightarrow X_{\sigma'}$ determines a monomial homomorphism $ \varphi^{*} : R_{\sigma'} \longrightarrow R_{\sigma}  $ and reciprocally, \cite{GEwald1993}.\end{proposition}

\begin{definition}\label{Definition 2.3.1.} For two lattice cones $ \sigma \, \subset \mathbb{R}^{n}= lin(\sigma) $ and $ \sigma' \, \subset  \mathbb{R}^{m}= lin(\sigma') $, we say that $ \sigma $ and $ \sigma' $ are isomorphic  and denote  $ \sigma \, \simeq \, \sigma'$, if $ m= n $ and  there exists an uni modular transformation $ L : \mathbb{R}^{n} \longrightarrow \mathbb{R}^{n} $ such that $ L(\sigma')=\sigma $. The monoids $ \sigma \cap \mathbb{Z}^{n} $ and $ \sigma' \cap \mathbb{Z}^{n} $ are isomorphic also.\end{definition}
\begin{definition}\label{Definition 2.3.2.} Given $ R_{\sigma} $ and $ R_{\sigma'} $ two $ \mathbb{C}-$ algebras, there is a \textbf{monomial isomorphic},    $ R_{\sigma} \simeq R_{\sigma'} $, if there exists an invertible monomial homomorphism $ R_{\sigma} \longleftrightarrow R_{\sigma'} $\end{definition}

\begin{theorem}\label{Theorem 2.3.0.} Set $ \sigma \, \subset \mathbb{R}^{n}= lin(\sigma) $ and $ \sigma' \, \subset \mathbb{R}^{m}= lin(\sigma') $, then  the following conditions are equivalent, \cite{GEwald1993}:
\begin{center}
\textbf{(a).} $ \sigma \, \simeq \, \sigma' $  {} {} {}  \textbf{(b).} $ R_{\sigma} \simeq R_{\sigma'} $  {} {} {} \textbf{(c).} $ X_{\sigma} \simeq X_{\sigma'} $.
\end{center}
\end{theorem}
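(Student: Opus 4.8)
The plan is to prove the equivalence by establishing the cycle of implications $(a) \Rightarrow (b) \Rightarrow (c) \Rightarrow (a)$, exploiting the functorial correspondence between cones, their monoid algebras, and the associated affine schemes.

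\medskip

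For $(a) \Rightarrow (b)$, I would start from the assumption $\sigma \simeq \sigma'$, so $m = n$ and there is a unimodular $L : \mathbb{R}^n \to \mathbb{R}^n$ with $L(\sigma') = \sigma$. Since $L \in GL_n(\mathbb{Z})$ it carries the lattice $\mathbb{Z}^n$ bijectively onto itself, hence restricts to a monoid isomorphism $\sigma' \cap \mathbb{Z}^n \to \sigma \cap \mathbb{Z}^n$. The key step is to transport this to the dual side: the transpose (or inverse transpose) $L^{\vee}$ is again unimodular and sends $(\sigma')^{\vee} \cap M$ isomorphically onto $\sigma^{\vee} \cap M$, because duality reverses the unimodular transformation while preserving the lattice. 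A monoid isomorphism of $S_{\sigma'} \to S_{\sigma}$ induces, by sending the character $\chi^{m}$ to $\chi^{L^{\vee} m}$, an invertible monomial homomorphism $R_{\sigma} \to R_{\sigma'}$; by Definition \ref{Definition 2.3.2.} this is exactly a monomial isomorphism $R_{\sigma} \simeq R_{\sigma'}$.

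\medskip

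For $(b) \Rightarrow (c)$, I would apply the $\mathrm{Spec}$ functor. Since $X_{\sigma} = \textbf{Spec}(R_{\sigma})$ by Definition \ref{Definition.}, any invertible monomial homomorphism $R_{\sigma} \simeq R_{\sigma'}$ yields an isomorphism of affine schemes $X_{\sigma'} \to X_{\sigma}$. The content to check here is that this scheme isomorphism is \emph{toric} in the sense of Definition \ref{Definition 2.3.0.}: because the homomorphism is monomial, the induced map is a monomial map respecting the torus structure, and its inverse is induced by the inverse monomial homomorphism, which is again monomial. Invoking Proposition \ref{Proposition 2.2.}, a monomial homomorphism of the coordinate algebras is precisely what corresponds to a toric morphism, so both $\varphi$ and $\varphi^{-1}$ are toric, giving $X_{\sigma} \simeq X_{\sigma'}$.

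\medskip

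The implication $(c) \Rightarrow (a)$ is the one I expect to be the main obstacle, since it must \emph{recover} the combinatorial data of the cone from the geometry of the variety. The strategy is to reverse Proposition \ref{Proposition 2.2.}: a toric isomorphism $\varphi$ induces an invertible monomial homomorphism $\varphi^{*} : R_{\sigma'} \to R_{\sigma}$, and the crux is to show a monomial algebra isomorphism forces an honest lattice isomorphism of the underlying monoids. Concretely, $\varphi^{*}$ must permute the units of $R_{\sigma}$ and match the minimal Hilbert basis generators (Lemma \ref{Lemma 2.1.1.}) of $S_{\sigma'}$ with those of $S_{\sigma}$; a monomial map that sends characters to characters is determined by a $\mathbb{Z}$-linear map on exponent vectors, and invertibility of the monomial homomorphism forces that linear map to be invertible over $\mathbb{Z}$, i.e. unimodular. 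One then dualizes this unimodular map back to $N_{\mathbb{R}}$ to obtain the $L$ required by Definition \ref{Definition 2.3.1.}. The delicate point is verifying that the monomial structure genuinely constrains the map to be lattice-linear rather than merely multiplicative on the algebra, and that the resulting transformation sends $\sigma'$ exactly onto $\sigma$ (not merely a cone with isomorphic monoid); this is where the strong convexity from Lemma \ref{Lemma 2.1.1.} and the rigidity of the Hilbert basis are essential.
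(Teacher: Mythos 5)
Your proposal is correct and, for the two implications the paper actually proves, it follows essentially the same route: a unimodular $L$ with $L(\sigma')=\sigma$ induces an invertible monomial homomorphism of the monoid algebras (the paper works with supports in $\sigma$ itself, $R_{\sigma}=\lbrace f : \mathrm{supp}(f)\subset \sigma\rbrace$, while you transport everything to the dual side $S_{\sigma}=\sigma^{\vee}\cap M$ via the inverse transpose of $L$ --- a bookkeeping difference only), and then $\mathrm{Spec}$ together with the correspondence between monomial homomorphisms and toric morphisms (Proposition 2.2) yields the toric isomorphism $X_{\sigma}\simeq X_{\sigma'}$. The genuine difference is at $(c)\Rightarrow(a)$: the paper does not prove this implication at all --- it closes the cycle by citing Ewald --- whereas you sketch an argument: an invertible monomial homomorphism is determined by an additive map on exponent vectors, and invertibility forces that map to be unimodular. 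Your sketch is the right strategy, and it can be completed cleanly: since $\mathrm{lin}(\sigma)=\mathbb{R}^{n}$, the monoid $\sigma\cap\mathbb{Z}^{n}$ generates $\mathbb{Z}^{n}$ as a group (every lattice point is a difference of two monoid elements, taking one of them deep inside $\sigma$), so the additive exponent map extends uniquely to a group isomorphism $\mathbb{Z}^{m}\to\mathbb{Z}^{n}$, which forces $m=n$ and unimodularity; its real-linear extension then carries $\sigma'=Con(\sigma'\cap\mathbb{Z}^{m})$ onto $\sigma=Con(\sigma\cap\mathbb{Z}^{n})$, which is exactly the $L$ required by Definition \ref{Definition 2.3.1.}. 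Note that the Hilbert-basis rigidity you invoke is not strictly necessary here --- full-dimensionality of the cones is what does the work --- though it does give you a finite generating set on which to verify the correspondence. In this sense your plan is sound and in fact more complete than the paper's own proof, which establishes only $(a)\Rightarrow(b)\Rightarrow(c)$ and defers the converse direction entirely to the literature.
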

\begin{proof} The implications a) $ \Rightarrow $ b)$ \Rightarrow $ c) are proven by means of the following diagram and we prove that it is commutative. 
\begin{center}

$ \qquad \qquad \sigma \quad \longrightarrow \quad R_{\sigma}  \quad \hookrightarrow\quad X_{\sigma}=$\textbf{Spec}$(R_{\sigma})$\\
$ \qquad \qquad \downarrow \uparrow  L^{-1}  \qquad  $ $ \downarrow \uparrow   \psi^{-1} \qquad \quad \quad\quad\quad $ $\downarrow \uparrow \varphi^{-1} $\\
$ \qquad \qquad \sigma^{'} \quad \longrightarrow \quad R_{\sigma^{'}} \quad \hookrightarrow\quad X_{\sigma^{'}}=$\textbf{Spec}$(R_{\sigma^{'}})$
\end{center}
We define the monomial homomorphisms $ h_{\sigma}:\sigma \longrightarrow R_{\sigma}$, $ h_{\sigma^{'}}:\sigma^{'} \longrightarrow R_{\sigma^{'}}  $, $ j_{\sigma}:R_{\sigma} \longrightarrow X_{\sigma} $, $ j_{\sigma^{'}}:R_{\sigma'} \longrightarrow X_{\sigma'} $. From the hypothesis  one has that $ \sigma \backsimeq \sigma^{'} $ then there  exists a uni-modular transformation $ L $ such that $ L(\sigma)=\sigma^{'} $ and its  inverse transformation $ L^{-1}(\sigma^{'})=\sigma $ is well defined. Then one does the following monomial homomorphisms:

 \begin{center}
 $  h_{\sigma}(a)= \sum \lambda_{a}t^{a} \, \in \, R_{\sigma}$ and $ a\in \, supp(h_{\sigma}) \subset \sigma $,\\
 $ h_{\sigma^{'}}(a')= \sum \lambda_{a'}t^{a'} \, \in \, R_{\sigma^{'}} $ and $a' \in \, supp(h_{\sigma^{'}}) \subset \sigma^{'}   $,\\
 $ \psi (h_{\sigma}(a))=\sum \lambda_{a'}t^{L(a)} \, \in \, R_{\sigma^{'}}$ and $ L(a)=a' \in \, supp(\psi) \subset \sigma^{'}  $,\\
 $ \psi^{-1} (h_{\sigma^{'}}(a'))=\sum \lambda_{a}t^{L^{-1}(a')} \, \in \, R_{\sigma}$ and $ L^{-1}(a')=a \in \, supp(\psi^{-1}) \subset \sigma  $.\\
 \end{center}
Choosing the prime generators $ t^{a} \, \in \, R_{\sigma}, \, a \,\in\, \sigma$ and $ t^{a'} \, \in \, R_{\sigma^{'}}, \, a' \,\in\, \sigma^{'} $, define:
\begin{center}
$  j_{\sigma}(t^{a})=\langle t^{a} \rangle \, \in \, X_{\sigma}=$\textbf{Spec}$(R_{\sigma})$,\\
$  j_{\sigma^{'}}(t^{a'})=\langle t^{a'} \rangle \, \in \, X_{\sigma^{'}}=$\textbf{Spec}$(R_{\sigma^{'}})$,\\
$  \varphi(\langle t^{a} \rangle)=\langle t^{L(a)} \rangle \, \in \, X_{\sigma^{'}}=$\textbf{Spec}$(R_{\sigma^{'}})$,\\
$  \varphi^{-1}(\langle t^{'a} \rangle)=\langle t^{L^{-1}(a')} \rangle \, \in \, X_{\sigma}=$\textbf{Spec}$(R_{\sigma})$.
\end{center}
where  $ \langle t^{L(a)} \rangle $ and  $ \langle t^{L^{-1}(a')} \rangle $  are prime ideals like a realization, respectively, of the spectrum \textbf{Spec} of the coordinate rings $ R_{\sigma} $ and $ R_{\sigma^{'}} $.
 
We see easily that these monomial homomorphisms accomplish the following identities, without lost of generality,  $ \lambda_{a}=\lambda_{a'}=1 $ so: $ L\circ L^{-1}=id_{\sigma} $, $ L^{-1}\circ L=id_{\sigma^{'}} $, $ \psi \circ \psi^{-1}=id_{R_{\sigma^{'}}} $, $ \psi^{-1} \circ \psi=id_{R_{\sigma}} $, $ \varphi^{-1}\circ \varphi = id_{X_{\sigma}} $, $\varphi \circ \varphi^{-1} = id_{X_{\sigma^{'}}}  $. 
The isomorphisms $ L $, $ \psi $, $ \varphi $, are isomorphisms of, cones, algebras of coordinate rings, and isomorphisms of toric varieties (toric morphism) respectively, and the first are well defined; one is a  uni-modular transformation and the second one is an isomorphisms of algebras. It only remains to proof $ \varphi $ is a toric morphism. Define the monomial homomorphism $ \Phi:\mathbb{C}^{n} \longrightarrow \mathbb{C}^{n} $ by $ \Phi(\langle t^{a} \rangle)=\langle t^{a'} \rangle\, \ni \, \Phi(X_{\sigma}) \subset X_{\sigma^{'}} $. This homomorphism induces the morphism $ \varphi $ which is bijective, so that for the generator $ t^{0}=1_{X_{\sigma}} $ as lattice vector $ a=0\, \in \, \sigma $, $ \varphi(t^{0})=t^{L(0)}=1_{X_{\sigma^{'}}} $. Then $ \varphi $ is injective and consider  the generator $ t^{a'}\, \in \, X_{\sigma^{'}} $. Since  $ L(a)=a'\Rightarrow \,\exists \,t^{a} \, \in \, X_{\sigma} \, \ni \,\varphi(t^{a})=t^{L(a)}=t^{a'} $, then $ \varphi $ is surjective. Note that   $ \varphi=\Phi\mid _{X_{\sigma}} $; in the same way one can see that $ \varphi^{-1} $ is a toric morphism too. Therefore, $ \varphi $ is a toric isomorphism. On the other hand,  $ \psi(h_{\sigma}(a))=h_{\sigma^{'}}(L(a)) $, $ \varphi(j_{\sigma}(t^{a}))=j_{\sigma^{'}}(t^{a'}) $; which proves that the   diagram commutes and one  obtains the isomorphisms wished. For  details of the  implication c) $ \Rightarrow $ a), see \cite{GEwald1993}. \textbf{q.e.d.} \end{proof}

%%%%%%%%%%%%%%%%%%%%%%%%%%%%%%%%%%%%%%%%%%%%%%%%%%%%%%%%%%%%%%%%%%%%%%
 
\begin{definition}\label{Definition 2.3.3.} Recall  that a  \textbf{complex projective n-space $ \mathbb{C}P^{n} $} is the space of class of equivalence of pairs of points such that it consists of lines on $ \mathbb{C}P^{n}=\mathbb{C}^{n+1} / \sim  $. The relationship between points $ \sim $ is of the following manner, given any vector $ v:= (\eta_{0},...,\eta_{n}) $ it defines a line $ \mathbb{C}*v $ and two of said vectors $ v \sim v' \in \mathbb{C}^{n+1}\setminus \lbrace 0 \rbrace $ define the same line if and only if, one is a scalar multiple of the other.\end{definition}\
%%%%%%%%%%%%%%%%%%%%%%%%%%%%%%%%%%%%%%%%%%%%%%%%%%%%%%%%%%%%%%%%%%%%
In the next example we point the important relationship of basis Hilbert and the Theorem \ref{Theorem 2.3.0.}, this connection is the great importance for the applications in the following sections; the example of Hirzebruch surface is possible consulting it in; Ewald, \cite{GEwald1993}.
\begin{example}\label{Example}. By $ H_{k} $ we mean the \textbf{Hirzebruch surface}. We consider a hyper surface in $ \mathbb{C}P^{1}\times \mathbb{C}P^{2}=\lbrace ([\eta_{0},\eta_{1}],[\zeta_{0},\zeta_{1},\zeta_{2}]) : (\eta_{0},\eta_{1}) \neq (0,0),(\zeta_{0},\zeta_{1},\zeta_{2})\neq (0,0,0) \rbrace$ determined by the equation, see example given in \cite{GEwald1993},\\
\begin{center}
$ \eta^{k}_{0}\zeta_{0}=\eta^{k}_{1}\zeta_{1},\quad k \in \mathbb{Z}. $
\end{center}
Applying Theorem \ref{Theorem 2.3.0.}, as in previous examples,  one  has the  isomorphic coordinate rings  by each one of the affine charts associated to this surface. Determining  the Newton polytopes of fan $ \Sigma $,so as its dual cones, it follows there four planes which are affine charts, and its gluing depend of $ k $, thus,\\
\begin{center}
$ R_{\sigma^{\vee}_{0}}= \mathbb{C}[z^{e_{1}},z^{e_{2}}] = \mathbb{C}[z_{1},z_{2}]$,\\
$ R_{\sigma^{\vee}_{1}}= \mathbb{C}[z^{-e_{1}},z^{e_{1}+ke_{2}}]=\mathbb{C}[z^{-1}_{2},z_{1}z^{k}_{2}]$,\\
$  R_{\sigma^{\vee}_{2}} = \mathbb{C}[z^{-e_{1}},z^{e_{2}}]=\mathbb{C}[z^{-1}_{1},z_{2}]$,\\
$ R_{\sigma^{\vee}_{3}} =\mathbb{C}[z^{-e_{1}-ke_{2}},z^{-e_{2}}]=\mathbb{C}[z^{-1}_{1}z^{-k}_{2},z^{-1}_{2}] $;\\

which implies the following toric varieties:\\

$X_{\sigma^{\vee}_{0}}=$\textbf{Spec}$ (\mathbb{C}[z_{1},z_{2}])$ ;\\ 
$X_{\sigma^{\vee}_{1}}=$\textbf{Spec}$ (\mathbb{C}[z^{-1}_{2},z_{1}z^{k}_{2}])$ ;\\
 $X_{\sigma^{\vee}_{2}}=$\textbf{Spec}$ (\mathbb{C}[z^{-1}_{1},z_{2}]) $;\\
 $X_{\sigma^{\vee}_{3}}=$\textbf{Spec}$ (\mathbb{C}[z^{-1}_{1}z^{-k}_{2},z^{-1}_{2}]) $.
\end{center}
Where the fan $ \Sigma^{\vee} $ formed by the cones $\sigma^{\vee}_{0}=Con(e_{1},e_{2})  $, $ \sigma^{\vee}_{1}=Con(-e_{1},e_{1}+ke_{2}) $, $ \sigma^{\vee}_{2}= Con(-e_{1},e_{2}) $, $ \sigma^{\vee}_{3}=Con(-e_{1}-ke_{2},-e_{2} )$ are the Hilbert basis $ H_{\Sigma} $ associated to the fan $ \Sigma=\lbrace \sigma_{0}, \sigma_{1}, \sigma_{2} , \sigma_{3} \rbrace  $, with $ \sigma_{0}= $Con$ (e_{1}, e_{2}) $, $ \sigma_{1}= $Con$ (-e_{2}, -ke_{1} + e_{2}) $, $ \sigma_{2}= $Con$ (-e_{2}, e_{1}) $, $ \sigma_{3} =$Con$ (-ke_{1} + e_{2}, e_{1})$. This technique will be applied in the example of three layer perceptron.\\

Following this re parametrization of polynomials one can see that it is convenient to work in complex projective spaces according to Theorem 1.\end{example}

\textbf{Gluing Maps.}\\
\begin{lemma}\label{Lemma 2.3.0.} Let $ \sigma $  be a lattice cone and set $ \tau \, \preceq \, \sigma  $. The natural identification,
\begin{center}
$  X_{\tau^{\vee}} \: \simeq \: X_{\sigma^{\vee}} \setminus \lbrace  u_{k}= 0 \rbrace$.
\end{center}
where $ u_{k} $ is the last generator of the representation of the coordinate ring associated to $ X_{\sigma^{\vee}} $, see details of the proof, ref., \cite{GEwald1993}.\end{lemma}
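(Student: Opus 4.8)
The plan is to identify $X_{\tau^\vee}$ with a principal open subset of $X_{\sigma^\vee}$ by realizing its coordinate ring $R_{\tau^\vee}$ as a localization of $R_{\sigma^\vee}$ at a single monomial, namely the last Hilbert basis generator $u_k$. First I would use the face hypothesis $\tau \preceq \sigma$ together with the Separation Lemma and the face Lemma recalled in the list of facts above to fix a lattice point $m \in S_\sigma = \sigma^\vee \cap M$ whose supporting hyperplane cuts out the face, i.e.
\begin{equation}
\tau = \sigma \cap H_m = \{\, u \in \sigma : \langle m, u\rangle = 0 \,\},
\end{equation}
and which can be taken among the generators of $S_\sigma$, so that the monomial $t^m$ is precisely the distinguished generator $u_k$ of $R_{\sigma^\vee}$; this choice is possible because $m$ lies in the relative interior of $\tau^\perp \cap \sigma^\vee$.

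The technical core is the semigroup identity
\begin{equation}
S_\tau = \tau^\vee \cap M = S_\sigma + \mathbb{Z}_{\geq 0}\,(-m). \label{eq:Sgluing}
\end{equation}
The inclusion $\supseteq$ is immediate, since for $w \in S_\sigma$ and $p \geq 0$ the element $w - pm$ pairs nonnegatively against every $u \in \tau$ (because $\langle m, u\rangle = 0$ there), hence $w - pm \in \tau^\vee \cap M$. The reverse inclusion $\subseteq$ is where the face structure does the real work: writing $\sigma = Con(v_1,\dots,v_s)$ with finitely many ray generators (Gordan's Lemma), I would, given $w \in S_\tau$, find $p \geq 0$ with $w + pm \in S_\sigma$. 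On the generators $v_i \in \tau$ one has $\langle m, v_i\rangle = 0$ and $\langle w, v_i\rangle \geq 0$, so the pairing stays nonnegative for every $p$; on the generators $v_i \notin \tau$ one has $\langle m, v_i\rangle > 0$, so $\langle w + pm, v_i\rangle = \langle w, v_i\rangle + p\langle m, v_i\rangle \geq 0$ once $p$ is large. Taking $p$ to be the maximum of the finitely many resulting thresholds forces $w + pm \in \sigma^\vee \cap M = S_\sigma$, whence $w = (w + pm) - pm$ lies in the right-hand side of \eqref{eq:Sgluing}.

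With \eqref{eq:Sgluing} established, passing to rings is formal:
\begin{equation}
R_{\tau^\vee} = \mathbb{C}[S_\tau] = \mathbb{C}[S_\sigma]\bigl[t^{-m}\bigr] = (R_{\sigma^\vee})_{t^m} = (R_{\sigma^\vee})_{u_k}, \label{eq:Rgluing}
\end{equation}
the localization of $R_{\sigma^\vee}$ at the last generator $u_k = t^m$. Taking \textbf{Spec} and using that the spectrum of a localization at an element $f$ is the principal open set $D(f)$, one obtains from \eqref{eq:Rgluing}
\begin{equation}
X_{\tau^\vee} = \textbf{Spec}\,(R_{\sigma^\vee})_{u_k} = D(u_k) = X_{\sigma^\vee} \setminus \{\, u_k = 0 \,\}.
\end{equation}
The identification is the natural one because the localization map $R_{\sigma^\vee} \hookrightarrow R_{\tau^\vee}$ is a monomial homomorphism; by Proposition 2.2 it induces a toric morphism compatible with the torus action, exactly as in the commutative diagram used in the proof of Theorem \ref{Theorem 2.3.0.}.

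I expect the main obstacle to be the reverse inclusion in \eqref{eq:Sgluing}: verifying that inverting the single monomial $t^m$, rather than passing to a larger localization, already produces all of $S_\tau$, and that the power $p$ of $m$ needed to carry a given $w \in S_\tau$ back into $S_\sigma$ stays finite. The clean route is to reduce to the finitely many Hilbert basis generators of $S_\tau$ via Gordan's Lemma and to bound $p$ on each of them, which is exactly where the finite generation proved earlier becomes indispensable.
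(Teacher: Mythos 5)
Your proposal is correct and is essentially the argument the paper itself defers to \cite{GEwald1993}: the paper gives no proof of this lemma beyond the citation, and the standard proof there is exactly your route, namely the semigroup identity $S_\tau = S_\sigma + \mathbb{Z}_{\geq 0}(-m)$ for $m$ cutting out the face $\tau$, followed by the identification $R_{\tau^\vee} = (R_{\sigma^\vee})_{t^m}$ and $\mathrm{Spec}$ of a localization being the principal open set $D(t^m)$. The only point to state more carefully is that an element $m$ in the relative interior of $\tau^\perp \cap \sigma^\vee$ need not itself be a minimal Hilbert basis element of $S_\sigma$, so $t^m$ may have to be appended as a (possibly redundant) final generator of the chosen presentation --- which is exactly how the lemma's phrase ``the last generator of the representation of the coordinate ring'' should be read, and which your parenthetical claim that $m$ ``can be taken among the generators'' quietly assumes.
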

\begin{definition} \label{Definition 2.3.3.1.} The isomorphism,
\begin{center}
$ \psi_{\sigma, \sigma'} : \,  X_{\sigma^{\vee}} \setminus \lbrace  u_{k}= 0 \rbrace \longrightarrow  X_{\sigma'^{\vee}} \setminus \lbrace  v_{l}= 0 \rbrace $.
\end{center}
is called \textbf{gluing morphism}, which glues the varieties $ X_{\sigma^{\vee}} $ and $ X_{\sigma'^{\vee}} $ in the variety $ X_{\tau^{\vee}} $.\end{definition}
\subsection{Toric Resolution.}
\begin{definition}\label{Definition 2.4.0.} (Singularity) Let $ X_{\sum} $  be a n-dimensional toric variety and let $ \sum $ be a re\-gu\-lar fan. A point $ p \, \in \, X_{\sum} $ is called \textbf{singular} or \textbf{singularity} of $ X_{\sum} $, if $p$ belongs to an  affine chart $ X_{\sigma^{\vee}} $  where $ \sigma \, \in \, \sum $ which is not of the form $ \mathbb{C}^{k} \times (\mathbb{C}^{*})^{n-k} $. For details of the proof, see ref., \cite{GEwald1993}.\\
\end{definition} 

\begin{theorem}\label{Theorem of Hironaka-Atiyah} (Hironaka-Atiyah) Let $ f $ be a real analytical function in a neighborhood  of $ \omega = (\omega_{1},....,\omega_{n})\in \mathbb{R}^{n} $ such that  $ f(\omega)= 0 $. Then there exists an open set $ V \subset\mathbb R$, a real analytical variety $ U $ and a proper analytical map $ g :U  \to V$ such that:\\
(a) $ g\quad :U-\epsilon \longrightarrow V-f^{-1}(0) $ is an isomorphism, where  $ \epsilon =g^{-1}(f(0)) $,\\
(b) For each $ u \in U $, there  exist local analytical coordinates  $ (u_{1},....,u_{n}) $ such that $ f(g(u))=\pm u^{s_{1}}_{1}u^{s_{2}}_{2}*...*u^{s_{n}}_{n} $, where $ s_{1},....,s_{n} $ are  non negative integers; see ref. \cite{SWatanabe52001}.
\end{theorem}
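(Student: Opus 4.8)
The plan is to realize $g$ as a proper modification that turns the total transform of $f$ into a \emph{normal crossings divisor}, since a normal crossings equation is exactly a monomial $u_1^{s_1}\cdots u_n^{s_n}$ times a non-vanishing analytic unit, and the $\pm$ in (b) absorbs the sign of that unit. First I would pass to the local analytic germ of $f$ at $\omega$ and reformulate (b) as \emph{embedded resolution} of the hypersurface $f^{-1}(0)$: a proper map $g\colon U\to V$ with $U$ smooth and $g^{-1}(f^{-1}(0))$ supported on finitely many smooth hypersurfaces crossing transversally. Granting such a $g$, in adapted local coordinates $(u_1,\dots,u_n)$ the pulled-back equation $f(g(u))$ is a monomial times a unit; after shrinking and using that the unit is, up to sign, a positive analytic function, one extracts precisely $f(g(u))=\pm\,u_1^{s_1}\cdots u_n^{s_n}$ with $s_i\in\mathbb{Z}_{\ge 0}$, which is statement (b).

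Second, I would build $g$ as a finite composition of blow-ups along smooth centers. This is the hard core of the theorem and is carried out by induction on $n$, attaching to $f$ a resolution invariant (the order of vanishing refined by the Hilbert--Samuel function and descent to maximal-contact hypersurfaces); one blows up the smooth locus on which this invariant is maximal, checks that it never increases and eventually strictly decreases, and invokes a termination (Noetherian) argument to conclude the process stops after finitely many steps. Properness of $g$ then follows because each blow-up is proper and properness is stable under composition, while part (a) holds because a blow-up is an isomorphism away from its center and all centers lie over the zero locus, so $g$ restricts to an isomorphism $U\setminus\epsilon \to V\setminus f^{-1}(0)$ with $\epsilon=g^{-1}(f^{-1}(0))$.

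Third, and this is where the toric machinery of the earlier sections is meant to replace the general blow-up construction, I would treat directly the case in which $f$ is \emph{non-degenerate} with respect to its Newton polytope $New(f)$ (Definition \ref{Definition 2.1.2.}). Here one subdivides the normal fan of $New(f)$ into a regular fan $\Sigma$ by the Hilbert-basis procedure of Lemma \ref{Lemma 2.1.1.}, so that every maximal cone is simplicial and generated by part of a $\mathbb{Z}$-basis, and takes $g$ to be the resulting proper toric morphism $X_\Sigma\to\mathbb{C}^n$ assembled from the gluing morphisms of Definition \ref{Definition 2.3.3.1.}. On each affine chart $X_{\sigma^\vee}$ the chart coordinates are the monomials attached to the generators of $\sigma^\vee\cap M$, and non-degeneracy forces the initial form $in_\omega(f)$ to be invertible on the torus; consequently $f(g(u))$ is a single monomial in those coordinates times a unit, and the exponents $s_i$ are read off directly from the chosen Hilbert basis.

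The main obstacle is the general degenerate case, which cannot be reduced to the toric picture: the existence of a globally coherent choice of smooth centers and, above all, the \emph{termination} of the blow-up sequence constitute the famously deep content of Hironaka's theorem, for which I would cite the full resolution argument rather than reproduce it. For the learning machines studied below it is enough that each Kullback function $f$ be non-degenerate (possibly after a preliminary monomial change of variables), so that the explicit toric resolution of the third step applies and the learning-curve exponents are obtained combinatorially from the Hilbert basis.
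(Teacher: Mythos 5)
The paper offers no proof of this theorem at all: it is stated as a classical cited result (Hironaka's 1964 resolution theorem, in the real analytic form used by Watanabe), so your decision to defer the deep core — the choice of smooth centers and the termination of the blow-up sequence — to citation is exactly the paper's own treatment. The scaffolding you add around that citation is correct and goes beyond what the paper provides: reducing (b) to embedded resolution with normal crossings and absorbing the non-vanishing analytic unit into a coordinate to obtain the $\pm$ monomial form, obtaining properness and part (a) from the fact that blow-ups are proper and are isomorphisms away from centers lying over $f^{-1}(0)$, and the toric/Hilbert-basis construction in the Newton-nondegenerate case, which is precisely the form of the theorem the paper actually exploits in its applications.
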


The previous theorem is a version of the well-known theorem of resolution of singula\-ri\-ties established by  Hironaka in algebraic geometry, see, ref. \cite{HHironaka1964}, \cite{SWatanabe22001}.\\

\begin{theorem}\label{Theorem of toric modification} Let $ X_{\sum} $ be a regular toric variety, and let $ X_{\sum_{0}} $ be a toric invariant sub variety defined by the star $ st(\sigma,\sum) \backsimeq  \sum_{0}  $ of $ \sigma $ into $ \sum $; $ 1 < k:=dim \sigma \leqslant n $.\\
(a) Under toric blow up $ \psi^{-1}_{\sigma} $, any point $ x \in X_{\sum_{0}} $ is  substituted by a k-dimensional \textbf{(k-1) projective space.}\\
(b) The blow down $ \psi_{\sigma} $ is a toric morphism which is bijective in the outside of $ \psi^{-1}_{\sigma} $.\\
See ref. for the proof of this fact, \cite{GEwald1993}.
\end{theorem}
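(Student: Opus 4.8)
The plan is to reduce the global assertion to an explicit computation on one affine chart, exploiting the regularity of the fan together with Theorem \ref{Theorem 2.3.0.}, and then to propagate the conclusion by the gluing construction of Lemma \ref{Lemma 2.3.0.} and Definition \ref{Definition 2.3.3.1.}. First I would fix the combinatorial data of the blow up. Since $\Sigma$ is regular, $\sigma$ is a regular $k$-dimensional cone, say $\sigma = Con(v_{1},\dots,v_{k})$ with $v_{1},\dots,v_{k}$ extendable to a lattice basis of $N$. Setting $v_{0}=v_{1}+\dots+v_{k}$, I would define the star subdivision $\Sigma^{*}$ of $\Sigma$ along $\sigma$: every cone $\tau$ with $\sigma \not\preceq \tau$ is retained, while every cone $\tau$ with $\sigma \preceq \tau$ is replaced by the cones spanned by $v_{0}$ together with the proper faces of $\tau$ that do not contain all of $\sigma$. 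The star $st(\sigma,\Sigma)\simeq\Sigma_{0}$ then encodes exactly the toric invariant subvariety $X_{\Sigma_{0}}$ to be modified, namely the orbit closure associated with $\sigma$.

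For part (a) I would pass to the local model. By Theorem \ref{Theorem 2.3.0.} there is a unimodular transformation carrying $\sigma$ to the standard cone $Con(e_{1},\dots,e_{k})\subset\mathbb{R}^{n}$; this induces a monomial isomorphism of coordinate rings and hence an isomorphism of affine charts, so that $X_{\sigma}\simeq\mathbb{C}^{k}\times(\mathbb{C}^{*})^{n-k}$ exactly as in the worked example following Definition \ref{Definition.}, with $X_{\Sigma_{0}}$ identified with $\{0\}\times(\mathbb{C}^{*})^{n-k}$. Under this identification the star subdivision along $\sigma$ becomes the standard subdivision of $Con(e_{1},\dots,e_{k})$ by the diagonal ray $e_{1}+\dots+e_{k}$, whose associated toric morphism is the blow up of $\mathbb{C}^{k}$ at the origin in product with the factor $(\mathbb{C}^{*})^{n-k}$. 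I would then write out the $k$ new maximal cones, compute their dual cones and coordinate rings $R_{\sigma_{i}^{\vee}}$, and read off that the new locus sits over $\{0\}\times(\mathbb{C}^{*})^{n-k}$ and that its fiber over each point of $X_{\Sigma_{0}}$ is covered by $k$ affine charts $X_{\sigma_{i}^{\vee}}$ glued by the maps $\psi_{\sigma_{i},\sigma_{j}}$ in precisely the pattern of the standard charts of complex projective space, so that the fiber is the $(k-1)$-dimensional projective space $\mathbb{C}P^{k-1}$ of Definition \ref{Definition 2.3.3.}.

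For part (b) I would note that the identity map on $N$ is compatible with both fans, since every cone of $\Sigma^{*}$ is contained in a cone of $\Sigma$; by Proposition 2.2 this lattice map induces a monomial homomorphism of coordinate rings on each chart and hence a toric morphism $\psi_{\sigma}:X_{\Sigma^{*}}\to X_{\Sigma}$, the blow down. Outside the star the two fans literally coincide, so the gluing data of Lemma \ref{Lemma 2.3.0.} agree there and $\psi_{\sigma}$ restricts chart by chart to a monomial isomorphism, which is the asserted bijectivity away from $\psi_{\sigma}^{-1}(X_{\Sigma_{0}})$. I expect the main obstacle to be not the combinatorics but the verification of the local fiber: one must check that the $k$ dual cones $\sigma_{i}^{\vee}$, together with their Hilbert bases, glue under $\psi_{\sigma_{i},\sigma_{j}}$ exactly according to the equivalence relation of Definition \ref{Definition 2.3.3.}, so that the exceptional fiber is genuinely $\mathbb{C}P^{k-1}$ and not some other toric compactification, and that this identification is independent of the chosen unimodular trivialization and therefore globalizes consistently over all of $X_{\Sigma_{0}}$.
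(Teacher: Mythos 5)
First, a point of comparison: the paper does not actually prove this theorem; it is stated with an explicit deferral to \cite{GEwald1993}, so your attempt can only be measured against the standard argument given there (blow-up via stellar subdivision of a regular fan). Your outline follows exactly that route: the subdivision $\Sigma^{*}$ of $\Sigma$ at $v_{0}=v_{1}+\cdots+v_{k}$, the reduction of the chart $X_{\sigma}$ to the model $\mathbb{C}^{k}\times(\mathbb{C}^{*})^{n-k}$ via regularity of $\sigma$ together with Theorem \ref{Theorem 2.3.0.}, the gluing of the $k$ subdivided charts in the pattern of the standard charts of $\mathbb{C}P^{k-1}$ (Definition \ref{Definition 2.3.3.}), and, for (b), the observation that the identity of the lattice is compatible with both fans and induces a toric morphism which is an isomorphism wherever the fans agree. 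Part (b) is essentially sound once you add the standard remark that the union of the charts $X_{\tau}$ over cones $\tau$ not having $\sigma$ as a face is precisely $X_{\Sigma}\setminus X_{\Sigma_{0}}$, so that "where the fans coincide" really is the complement of the center downstairs and of the exceptional set upstairs.

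The genuine gap is in part (a): your local computation lives entirely in the chart $X_{\sigma}$, and that chart meets $X_{\Sigma_{0}}$ only in its dense orbit $\{0\}\times(\mathbb{C}^{*})^{n-k}$. The subvariety defined by the star $st(\sigma,\Sigma)$ is the orbit closure, which also contains the smaller orbits attached to the cones $\tau\in st(\sigma,\Sigma)$ with $\tau\supsetneq\sigma$, and none of those points lie in $X_{\sigma}$. Concretely, take $X_{\Sigma}=\mathbb{C}^{3}$ and $\sigma=Con(e_{1},e_{2})$, so $X_{\Sigma_{0}}$ is the third coordinate axis: then $X_{\sigma}\simeq\mathbb{C}^{2}\times\mathbb{C}^{*}$ misses the origin, which is the point of $X_{\Sigma_{0}}$ lying in the orbit of the maximal cone $Con(e_{1},e_{2},e_{3})$, and your computation says nothing about the fiber over it. The repair is routine but must be stated: run the same argument in every chart $X_{\tau}$ with $\tau\in st(\sigma,\Sigma)$, $m:=\dim\tau$, where regularity lets you take $\tau\simeq Con(e_{1},\dots,e_{m})$ with $\sigma$ corresponding to $Con(e_{1},\dots,e_{k})$; the stellar subdivision of $\tau$ at $v_{0}$ then realizes the blow-up of $\mathbb{C}^{k}$ at the origin times $\mathbb{C}^{m-k}\times(\mathbb{C}^{*})^{n-m}$, whose exceptional set has fiber $\mathbb{C}P^{k-1}$ over every point of $X_{\Sigma_{0}}\cap X_{\tau}$, including the deeper strata; these local identifications are then glued consistently by Lemma \ref{Lemma 2.3.0.}. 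Only with this supplement does the assertion hold for \emph{any} point $x\in X_{\Sigma_{0}}$ rather than for points of the open orbit alone.
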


\section{Singular Statistical Learning.}

In this section we will focus on the statistical learning machine. Given a probability space $ (\Omega, \mathbb{F}, P)  $, where $ \Omega $ is a set of events, $ \mathbb{F} $  is a $ \sigma-$algebra on $ \mathbb{F} $, and  $ P $ is a measure of Kolmogorov, one can compute  the predictive probability $ P(y \vert x,\omega) $ of the output variable $ y \in \mathbb{R}^{n}$ given $ x \in \mathbb{R}^{m}$ and a parameter vector $ \omega \, \Theta \subseteq \mathbb{R}^{n}$.  This probability is factorized according to  Bayes theorem and  to Theorem of Hammersley and Clifford; that is,   $ P(X \vert Y, \Theta) $ satisfies the local property of Markov and it can be factorized through an undirected graph $ G =(E,V)$, also it is represented by  a toric variety. \\

\begin{definition}\label{Definition 4.1.7} (Identifiable and non identifiable Machines) Let  $ (\Omega, \mathbb{F}, P)  $ be a probability space and let $ P(y \vert x,\omega) $  be the probabilistic inference or prediction probability of a statistical machine, where $ y \in \mathbb{R}^{n} $ is an     output vector  and  $ x \in \mathbb{R}^{m} $  is an input vector. If $  \omega \mapsto P(y \vert x,\omega) $ is an  injective mapping is we say the machine is \textbf{identifiable}, see  Watanabe \cite{SWatanabe42001}. If the mapping is not injective, then we say that the machine is a \textbf{ non identifiable machine}. \end{definition}

The probability densities of the learning machines are defined in the probability space $ (\Omega, \mathbb{F}, P)  $ and are denoted as follows:  
\begin{itemize}
\item \textbf{prediction of the vector } $P(y \vert x, \omega)  $, {} $ y \in \, \mathbb{R}^{M}$ , 
\item \textbf{true inference of the machine} $ q(y\vert x) $, 
\item  $ q(y \vert x)q(x) $ is the \textbf{ distribution of probability with which are taken and trained the set of examples} of inference machines in an independent way.
\end{itemize} 
 
\begin{definition}\label{Definition 4.1.8} Let $ \omega_{0} \in \Theta \, \subset \,\Omega$   be a parameter such that $ P(y \vert x,\omega_{0})=q(y \vert x) $; which means, that the parameter $ q(y \vert x) $    (which establishes   the true inference of the statistical machine)  is equal  to the  predictive probability density of the output vector $ y \in \mathbb{R}^{n} $. For non identifiable machines this parameter is not unique. Even more, the set of these  parameters is called \textbf{space of true parameters}  and it is denoted  by \\
\begin{center}
$ W_{0}= \lbrace \omega_{0} \in \Theta \, \subset \, \Omega :P(y \vert x,\omega_{0})=q(y \vert x) \rbrace $. 
\end{center}
It is well known that $W_{0}$ is a  sub-variety formed by singular points. If these probability densities are analytic functions, then $W_0$ is called an \textbf{analytic set}. But, if these probability densities are polynomials, then $W_0$ is called an \textbf{algebraic set}. These sets are very important for our study.
\end{definition}

\textbf{Watanabe Theorems.}\\

\begin{theorem}\label{(1) Theorem, Watanabe,} (1)Theorem, Watanabe, \cite{SWatanabe52001}, \cite{SWatanabe2005}. Suppose that $ f $ is an analytic function and $ \varphi$ is a probability density function both defined in $  \mathbb{R}^{d} $. Then, there exists a real constant $ C $ such that
\begin{center}
$ G(n) \leq \lambda_{1} \log n - (m_{1} - 1)\log (\log n) + C$,
\end{center}
 for any natural number $ n $. The rational number $ -\lambda_{1}$ {} $(\lambda_{1} > 0) $ and the natural number $ m_{1} $ are the  largest poles   of a  meromorphic function which is analytical continuation of  
\begin{center}
$ J(\lambda)= \displaystyle \int_{f(\omega)< \epsilon} f(\omega)^{\lambda}\varphi'(\omega) d\omega, \quad (Re(\lambda)> 0), $
\end{center}
where $ \epsilon > 0 $ is a constant, and $ \varphi'(\omega) $ is a function of class $C^{\infty}_{0}  $ satisfying  $ 0 \leqslant \varphi'(\omega) \leqslant \varphi(\omega)$.\end{theorem}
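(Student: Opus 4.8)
The plan is to reduce the statement to the analysis of the meromorphic zeta function $J(\lambda)$ and then transfer the data of its largest pole to the asymptotics of a Laplace integral that governs the learning curve $G(n)$. First I would recall that $G(n)$ is controlled by the free energy $F(n)=-\log Z(n)$ of the partition function $Z(n)=\int e^{-nf(\omega)}\varphi(\omega)\,d\omega$, and that replacing $\varphi$ by the compactly supported smooth $\varphi'$ with $0\le\varphi'\le\varphi$ only shrinks the integral, so $-\log\int e^{-nf}\varphi\,d\omega\le-\log\int e^{-nf}\varphi'\,d\omega$. This is exactly where the one-sided hypothesis is used, and it is the reason the conclusion is an inequality rather than an identity. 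The bridge between $J(\lambda)$ and the $\varphi'$-partition function $Z'(n)=\int_{f<\epsilon}e^{-nf}\varphi'\,d\omega$ is the \emph{state density} $v(t)$, defined by the requirement $\int g(f(\omega))\varphi'(\omega)\,d\omega=\int_0^\infty g(t)\,v(t)\,dt$, so that
\[
J(\lambda)=\int_0^\infty t^{\lambda}v(t)\,dt,\qquad Z'(n)=\int_0^\infty e^{-nt}v(t)\,dt,
\]
that is, $J$ is the Mellin transform of $v$ and $Z'$ its Laplace transform.

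Second, I would invoke the Hironaka--Atiyah resolution (Theorem \ref{Theorem of Hironaka-Atiyah}) to write $f\circ g=\pm u_1^{s_1}\cdots u_n^{s_n}$ in normal crossing form on each chart of a proper analytic modification, the pullback $g^{*}(\varphi'\,d\omega)$ contributing a monomial Jacobian factor $u_1^{h_1}\cdots u_n^{h_n}$ with non-negative integer exponents. On each chart the local contribution to $J(\lambda)$ factors into one-dimensional integrals $\int_0^1 u_j^{s_j\lambda+h_j}\,du_j=(s_j\lambda+h_j+1)^{-1}$, whose poles lie at the negative rationals $\lambda=-(h_j+1)/s_j$. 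Summing over the finitely many charts presents $J(\lambda)$ as meromorphic with poles contained in this discrete set; collecting poles that coincide across charts and coordinates produces the multiplicities, and the pole nearest the origin is $-\lambda_1$ of order $m_1$. This is precisely the step in which the toric and Hilbert-basis machinery of the earlier sections is meant to make the resolution explicit and the pole data computable.

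Third, I would run a Tauberian argument in two stages. By the standard Mellin-inversion dictionary, the location and order of the largest pole of $J(\lambda)$ dictate the leading behaviour of the state density as $t\to0^{+}$, namely $v(t)\sim C_0\,t^{\lambda_1-1}(-\log t)^{m_1-1}$. Substituting this into the Laplace transform and applying Watson's lemma in its logarithmically corrected form yields a lower estimate $Z'(n)\ge c\,n^{-\lambda_1}(\log n)^{m_1-1}$ for all large $n$. Taking $-\log$ gives $-\log Z'(n)\le\lambda_1\log n-(m_1-1)\log\log n+C'$, and combining this with the comparison $-\log Z(n)\le-\log Z'(n)$ from the first paragraph, together with the identification of $G(n)$ with the free energy $-\log Z(n)$, delivers the claimed bound $G(n)\le\lambda_1\log n-(m_1-1)\log\log n+C$ for every natural number $n$, after absorbing the finitely many error terms into the single constant $C$.

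The hardest part will be the Tauberian transfer together with the uniformity needed to promote an asymptotic estimate into a clean inequality valid for \emph{every} $n$, not only in the limit. In particular one must control the subleading poles of $J(\lambda)$ and the contributions of the remaining resolution charts so that their aggregate effect is absorbed into $C$; a second subtlety is the global bookkeeping of multiplicities, since two different charts may each contribute a pole at the same rational $-\lambda_1$, and $m_1$ is the maximal order over all such coincidences rather than a purely local invariant.
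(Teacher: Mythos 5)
The paper never actually proves this theorem: it is one of the compiled Watanabe results, stated with citations only (no proof environment follows it), so the only meaningful comparison is against the standard proof in the cited references. Your proposal reconstructs that proof essentially verbatim --- the one-sided comparison $-\log\int e^{-nf}\varphi\,d\omega \le -\log\int_{f<\epsilon}e^{-nf}\varphi'\,d\omega$ coming from $0\le\varphi'\le\varphi$, the state density $v(t)$ tying $J(\lambda)$ (its Mellin transform) to the restricted partition function (its Laplace transform), Hironaka resolution to normal crossing form for the meromorphic continuation and the pole data, and the Tauberian transfer of the largest pole $-\lambda_{1}$ of order $m_{1}$ into the stated bound --- so it is correct and takes essentially the same route as the paper's sources.
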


\begin{definition}\label{Definition 4.3.1.} The poles of  the function $ J $ belong to the intersection between the negative real semi axes and the set $ \lbrace m+\nu;  \ m=0,-1,-2,...,b(\nu)=0  \rbrace $. Denoting these poles in a decreasing manner:  $ -\lambda_{1},-\lambda_{2},-\lambda_{3},...,-\lambda_{k} $, where $ \lambda_{k} $ is a  rational number, and the  multiplicity of $ -\lambda_{k} $ is denoted by $ m_{k} $.
\end{definition}

\textbf{ Condition (A)}. Let $ \psi(x,\omega) $  be  a  real valued function, where  $ (x,\omega) \in \mathbb{R}^{M} \times \mathbb{R}^{d} $, such that:\\
(1) $ \psi(x,\cdot) $ is an analytic function  on  $ W=supp(\varphi) \subset \mathbb{R}^{d} $ which can be extended to a holomorphic function on some open set $ W^{*} $, where $ W\subset W^{*} \subset \mathbb{C}^{d} $, and  $ W^{*} $ is independent of $ x \in supp(q) \subset \mathbb{R}^{M}$.\\
(2) $ \psi(\cdot,\omega) $ is a measure function on  $ \mathbb{R}^{M} $, which  satisfies:
\begin{center}
$ \displaystyle \int sup_{\omega \in W^{*}} \Vert \psi(x,\omega) \Vert^{2}q(x) dx <  \infty $,
\end{center}
where $ \Vert \bullet \Vert $ is the norm of the vector $ \psi(x,\omega) $.\\

\begin{theorem}\label{(2) Theorem, Watanabe} (2) Theorem, Watanabe \cite{SWatanabe42001}, \cite{SWatanabe52001}. Set a constant $ \sigma > 0 $. Let $ \varphi $  be a probability density of class $ C^{\infty}_{0} $.  We will consider   the statistical learning machines characterized by the following   true inference of machine:
\begin{center}
$ P(y \vert x,\omega) =\dfrac{1}{(2\pi \sigma^{2})^{N/2}} \exp \left( \dfrac{-\Vert y - \psi(x,\omega)\Vert^{2}}{2\sigma^{2}}\right) $,
\end{center}
where both $ \psi(x,\omega) $ y $ \Vert \psi(x,\omega) \Vert^{2} $ satisfies the condition (A). Then there exists a constant $ C' > 0 $ such that \begin{center}
$ \vert G(n) - \lambda_{1} \log n + (m_{1} - 1)\log \log n \vert \leqslant C'$,
\end{center}
for any natural number $ n $,  where the rational number $ -\lambda_{1} (\lambda_{1} > 0) $ and a natural number $ m_{1} $ are the  largest poles   of a  meromorphic function which is analytical continuation of 
\begin{center}
$\displaystyle  J(\lambda)= \int _{f(\omega) < \epsilon} f(\omega)^{\lambda} \varphi(\omega) d\omega, (Re (\lambda)> 0) $,
\end{center}
where $ \epsilon > 0 $ is a constant.\end{theorem}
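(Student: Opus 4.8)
The plan is to reduce the two sided estimate to the one sided bound already available in Theorem \ref{(1) Theorem, Watanabe,}, with the entire analysis organised around the meromorphic zeta function $J(\lambda)$ and its largest pole. First I would make the Gaussian hypothesis explicit by identifying the analytic function $f$ occurring in $J(\lambda)$ with the Kullback--Leibler divergence of the true inference $q(y\vert x)$ from the model. For the displayed Gaussian family the integral $\int P(y\vert x,\omega_{0})\log\frac{P(y\vert x,\omega_{0})}{P(y\vert x,\omega)}\,dy$ collapses the quadratic exponent and leaves
\begin{center}
$f(\omega)=K(\omega)=\dfrac{1}{2\sigma^{2}}\displaystyle\int\Vert\psi(x,\omega_{0})-\psi(x,\omega)\Vert^{2}\,q(x)\,dx,$
\end{center}
where $\omega_{0}\in W_{0}$ is a true parameter in the sense of Definition \ref{Definition 4.1.8}. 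Condition (A)(1) makes $\psi(x,\cdot)$, and hence $K$, extend holomorphically to the complex neighbourhood $W^{*}$, so $f$ is the non negative real analytic function demanded by the hypotheses of Theorem \ref{Theorem of Hironaka-Atiyah}, while condition (A)(2) furnishes the uniform $L^{2}(q)$ bound needed later to pass from the average divergence $K$ to its empirical counterpart.

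Next I would apply the resolution of singularities of Theorem \ref{Theorem of Hironaka-Atiyah} to $f$ on a neighbourhood of the true parameter set $W_{0}=\{f=0\}$. This produces a proper analytic map $g:U\to V$, an isomorphism off $f^{-1}(0)$, in whose local coordinates the divergence and the Jacobian are simultaneously monomial,
\begin{center}
$f(g(u))=u_{1}^{s_{1}}\cdots u_{n}^{s_{n}},\qquad \vert g'(u)\vert=b(u)\,u_{1}^{h_{1}}\cdots u_{n}^{h_{n}},$
\end{center}
with $b$ a non vanishing analytic factor and the sign taken positive since $f\ge 0$. Pulling $J(\lambda)$ back through $g$ and using a partition of unity subordinate to the finite coordinate atlas factorises the zeta integral into products of one variable integrals $\int_{0}^{\delta}u_{j}^{s_{j}\lambda+h_{j}}\,du_{j}$, each of which contributes a simple pole at $\lambda=-(h_{j}+1)/s_{j}$. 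Thus $J(\lambda)$ continues meromorphically with poles on the negative rational axis; coincidences among these locations across charts raise the pole order, and collecting the largest of them identifies $-\lambda_{1}$ together with its multiplicity $m_{1}$ exactly as in Definition \ref{Definition 4.3.1.}. This is the point at which the toric and Hilbert basis machinery of the earlier sections becomes operative, since the exponent pairs $(s_{j},h_{j})$ are read directly off the fan that resolves $W_{0}$.

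With $\lambda_{1}$ and $m_{1}$ fixed, I would transfer the pole data of $J(\lambda)$ to the asymptotics of the stochastic complexity $G(n)=-\,\mathbb{E}[\log Z_{n}]$, where $Z_{n}=\int e^{-nK_{n}(\omega)}\varphi(\omega)\,d\omega$ and $K_{n}$ is the empirical divergence with mean $K=f$. Replacing $K_{n}$ by its mean gives the deterministic partition function $\int e^{-nf(\omega)}\varphi(\omega)\,d\omega$, whose leading behaviour is governed, through the Mellin correspondence, by the largest pole of $J(\lambda)$, namely $\sim C(\log n)^{m_{1}-1}\,n^{-\lambda_{1}}$. Taking $-\log$ produces the deterministic main term $\lambda_{1}\log n-(m_{1}-1)\log\log n$, and the upper half of the asserted inequality is precisely the content of Theorem \ref{(1) Theorem, Watanabe,}.

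I expect the lower bound to be the genuine obstacle. Theorem \ref{(1) Theorem, Watanabe,} controls $G(n)$ only from above, and to close the gap to the bounded error $C'$ one must show that replacing the empirical divergence $K_{n}$ by its mean $K$ costs at most $O(1)$, uniformly in $n$; equivalently, that the fluctuation of $\log Z_{n}$ about the deterministic value does not grow with $n$. Here the Gaussian structure is indispensable: it renders the log likelihood ratio quadratic in $y-\psi(x,\omega)$, so that the mean and variance of $K_{n}$ are explicit, and condition (A)(2) delivers the finite second moment on which an empirical process concentration estimate can be based to match the upper bound term by term. Proving that this fluctuation remains $O(1)$ rather than diverging is the delicate step; the reduction to $f=K$ and the monomialisation of $J(\lambda)$ described above are comparatively routine once Theorem \ref{Theorem of Hironaka-Atiyah} is in hand.
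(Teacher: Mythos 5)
You should first be aware that the paper contains no proof of this theorem: it is stated as a compiled result, with the argument deferred entirely to Watanabe \cite{SWatanabe42001}, \cite{SWatanabe52001}, so there is no in-paper proof to measure yours against. On its own terms, your outline does reconstruct Watanabe's framework correctly: identifying $f$ with the Kullback distance of the Gaussian model, monomialising it by Theorem \ref{Theorem of Hironaka-Atiyah}, reading the poles of $J(\lambda)$ from one-variable integrals in the resolved charts, converting pole data into asymptotics of the stochastic complexity through the Mellin correspondence, and recognising that the upper half of the two-sided bound is exactly Theorem \ref{(1) Theorem, Watanabe,}.

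The genuine gap is the step you yourself flag as delicate, and the route you indicate for it would fail as stated. A concentration estimate built only on the finite second moment from condition (A)(2) cannot show that the fluctuation of $\log Z_{n}$ is $O(1)$: near $W_{0}=\{f=0\}$ the deviation $\sqrt{n}\,(K(\omega)-K_{n}(\omega))$ is not uniformly controllable, and its supremum over the parameter space grows with $n$ unless it is first normalised by the divergence itself. The mechanism that actually closes the lower bound --- which the paper itself gestures at in Definition \ref{Definition (Synaptic Function)} --- is to work in resolved coordinates where $H(g(u))=A(u)^{2}$ is a normal crossing monomial and to divide the log-likelihood ratio by $A(u)$: the empirical divergence then decomposes as $nK_{n}(g(u))=nA(u)^{2}-\sqrt{n}\,A(u)\,\xi_{n}(u)$, where $\xi_{n}$ is the empirical process attached to the synaptic function $\psi(x,y,u)=\frac{1}{A(u)}\bigl(H(g(u))-\log\frac{q(y\vert x)}{P(y\vert x,g(u))}\bigr)$. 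It is the analyticity required by condition (A)(1), not moment bounds, that makes $\xi_{n}$ well defined on the set $\{A=0\}$ with integrable supremum; the random term is then absorbed by the elementary inequality $\sqrt{n}\,A\,\vert\xi_{n}\vert\le\frac{1}{2}\bigl(nA^{2}+\xi_{n}^{2}\bigr)$, which bounds $\log Z_{n}$ from both sides by the deterministic integral up to an $O(1)$ error. Without this normal-crossing normalisation your proposal establishes only the upper half of the inequality, which is already Theorem \ref{(1) Theorem, Watanabe,}, not the two-sided estimate that is the actual content of the theorem.
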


\textbf{Learning Curves and Resolution of Singularities.}\\
It is well known that  the  regular statistical models in which  $ \lambda_{1}=d/2 $ and $ m_{1}=1 $ are special cases of Theorem (2) (Watanabe). In models of non identifiable machines, generally, the  bayesian neural networks have different values of $ \lambda_{1} \leq d $  and  $ m_{1} \geq 1 $.  It is work of the algebraic geometry to find the poles $ \lambda_{1} $ and $ m_{1} $, of the meromorphic function $ J(\lambda) $ defined in theorem (1) and (2) (Watanabe),  by means of   techniques of resolution of singularities suggested by Watanabe, \cite{SWatanabe42001}, \cite{SWatanabe52001}, \cite{SWatanabe62001}, such as toric modification and blow up, in the algebraic set $ \lbrace \lambda \in W : H(\lambda)=J(\lambda) =0 \rbrace. $ .\\

\begin{corollary}\label{Corollary 4.3.1} Suppose the hypothesis of Theorem  \ref{(2) Theorem, Watanabe}. If $ c(n+1) - c(n)= o\left( \dfrac{1}{n\log n}\right)  $, then the \textbf{learning curve} is given by, 
\begin{center}
$ K(n)= \dfrac{\lambda_{1}}{n} + \dfrac{m_{1} - 1}{n \log n} + o\left( \dfrac{1}{n\log n}\right)  $.
\end{center}
Using this formula in regular models one has that  $ \lambda_{1} =d/2$ and $ m_{1}=1 $. For non identifiable models, such as bayesian neural networks, the corresponding values are  $ \lambda_{1} \leq d/2 $ y $ m_{1} \geq 1 $.\\
\end{corollary}

\begin{corollary}
\label{Corollary 4.3.2.} Suppose the hypothesis of Theorem \ref{(1) Theorem, Watanabe,}. If  $ \varphi'(\omega) > 0 $ for each $ \omega_{0} \in W_{0} $,  then $ \lambda_{1} \leq d/2 $ where $ d $ is the \textbf{dimension of parameter spaces}. See ref. \cite{SWatanabe42001}, \cite{SWatanabe52001}.\end{corollary}

\begin{definition}\label{Definition 4.1.9} The \textbf{Kullback distance}, or  \textbf{information entropy}, of a statistical machine  
quantifies  the distance between the predictive probability $ P(y \vert x,\omega) $, of the output variable $ y \in \mathbb{R}^{N}$, and the true statistical inference of the machine $q(y \vert x)  $. \\
\begin{center}
\textbf{(Kullback distance)} $\displaystyle H(\omega) = \int \log \dfrac{q(y \vert x)}{P(y \vert x,\omega)}q(y \vert x)q(x) dxdy$.
\end{center}
where $ q(x) $ is the true probability of the input variable $ x $.\end{definition}

The Kullback distance induces other important definitions. 

\begin{definition}\label{Definition 4.1.10.}  The \textbf{learning curve} of a statistical machine or \textbf{generalization of the error}, Watanabe \cite{SWatanabe32001}, is given by\\
\begin{center}
$ \displaystyle K(n)= E_{n} \left\lbrace  \int \log \dfrac{q(y \vert x)}{P_{n}(y \vert x,\omega)}q(y \vert x)q(x) dxdy \right\rbrace  $,
\end{center}
where $ E_{n} \lbrace \bullet \rbrace $ is the expected value over all pairs of trained examples  by the machine, and $ P_{n}(y \vert x,\omega) $ is the mean density probability over all posterior probabilities of the output of the machine. 
\end{definition}

\textbf{Algebraic geometry of Statistical machines.}\\

\begin{definition}\label{Definition 4.1.11} It is important to comment that an \textbf{algebraic set} $ W_{0} $ is, equivalently, defined by 
\begin{center}
$ W_{0}= \lbrace H(\omega)=0 : \omega \in \Theta \rbrace$.
\end{center}
This set is not empty and is the principal set of our  study related with   singular machines.
\end{definition}

\section{Singular Machines.}

\begin{definition}\label{Definition 4.2.1} Let  $(\Omega, \mathbb{F}, P) $ be  a probability space and let  $ y \in \mathbb{R}^{N}$  be a random vector. Define the \textbf{Fisher information matrix}, as follows, see A.S. Poznyak, \cite{ASPoznyak2009}: 

\begin{center}
$\displaystyle  I(\omega) = E_{n} \left\lbrace \nabla_{\omega} \log P_{n}(y \vert x,\omega)   \nabla_{\omega}^{\intercal} \log  P_{n}(y \vert x,\omega) \right\rbrace = \, \, \int_{\omega \in \mathbb{R}^{M}} \left\lbrace \nabla_{\omega} \log P_{n}(y \vert x,\omega)   \nabla_{\omega}^{\intercal} \log  P_{n}(y \vert x,\omega) \right\rbrace P_{n}(y \vert x,\omega)q(x) dxdy $,
\end{center}
where $ P_{n} $ is given  in Definition \ref{Definition 4.1.10.} In general terms, this expression can be understood as a metric in the parameter space whenever the matrix is  positive defined.
\end{definition}
 
\begin{definition}\label{Definition 4.2.2.} A statistical learning machine is called \textbf{regular learning machine} if the Fisher information matrix is positive defined, otherwise, it is called  a \textbf{singular learning machine} if there  exists a parameter $ \omega \in \Theta $ (called singularity of the Fisher information matrix) such that \textbf{det}$ I(\omega)=0 $. These singularities are several  and the probability of the parameter $ \omega $ can not be approximated by a quadratic form in the sense of differential geometry, see the regular statistical machines, see ref. \cite{SWatanabe42001}.\end{definition}

\subsection{Effect of the Singularities in the Statistical Learning.}

In the following we define the mean empirical Kullback distance as:
\begin{center}
$ H_{n} = \dfrac{1}{n} \sum_{i=1}^{n} \log \dfrac{q(y_{i} \vert x_{i})}{P(y_{i} \vert x_{i},\omega)}$,
\end{center}
and let $ H(\omega) $  be an usual Kullback distance as we previously saw, if there exists a parameter $ \omega_{0} $ such that $ H(\omega_{0})=0 $, then $ H(\omega) $ satisfies the statement of Theorem \ref{Theorem of Hironaka-Atiyah}. Therefore,  there exists a variety $ U $ and  a resolution map $ g: U \mapsto W $, such that,
\begin{center}
$ H(g(u))=A(u)^{2} $ with $ A(u)= u^{k_{1}}_{1}*...*u^{k_{d}}_{d} $,
\end{center}
and the empirical distance can be written as above; for more details see ref. \cite{SWatanabe32001}, \cite{SWatanabe42001}, \cite{SWatanabe62001}.

\begin{definition}\label{Definition (Synaptic Function)} According to the previous notations, the synaptic function of a statistical learning machine is given as follows:
 \begin{center}
$ \psi(x,y,u)=\dfrac{1}{A(u)}\left( H(g(u)) - \log\dfrac{q(y \vert x)}{P(y \vert x,g(u))}\right)  $.
\end{center}
 The function $ \psi(x,y,u) $ can be written as $ \psi(x,y,g^{-1}(u)) $ if $ H(\omega)\neq 0 $. However, it is well defined, in general, when  $ H(\omega)=0 $. On the other hand, we proved that $ \psi(x,y,u) $ is an analytic function of $ u $, whenever $ H(g(u))=0 $. From the property of normal crosses of $ A(u) $, one can see that   $ \psi(x,y,u) $ is well defined in the variable $ u $, see ref., \cite{SWatanabe42001}, \cite{SWatanabe52001}.\end{definition}
 \textbf{Learning Coefficient.} In this part of our work, we compute the learning coefficient of the following statistical learning machine:\\
\begin{center}
$P(y \vert x,a,b) = \dfrac{1}{\sqrt{2\pi}} \exp(-\dfrac{1}{2}(y - af(b,x))^{2})$.
\end{center}
The true statistical inference of the machine is given by:
\begin{center}
$ q(y \vert x) =\dfrac{1}{\sqrt{2\pi}} \exp\left( \dfrac{-1}{2}(y -\dfrac{a_{0}f(b_{0},x)}{\sqrt{n}})^{2}\right) $,
\end{center} 
where
\begin{center}
$ \int \dfrac{\psi(b)db}{\Vert f(b) \Vert} < \infty $.
\end{center}
Then, the learning curve of this machine can be expanded asymptotically by: 
\begin{center}
$ G(n)=\dfrac{\lambda(a_{0},b_{0})}{n} + o\left( \dfrac{1}{n}\right) $.
\end{center}
The \textbf{learning coefficient}, which is independent of $ n $,  is given  by: 
\begin{center}
$ \lambda(a_{0},b_{0})=\dfrac{1}{2} \left\lbrace  1 + a^{2}_{0}\Vert f(b_{0})\Vert^{2} - \sum_{j=1}^{J}a_{0}f_{j}(b_{0})E_{g}\left[ \dfrac{\partial}{\partial g_{j}} \log Z(g)\right]  \right\rbrace $,
\end{center}
 see, Watanabe, \cite{SWatanabe42001}, where $ g=\lbrace g_{j} \rbrace$ is a random variable  subject to the dimensional gaussian distribution $ J $,  whose mean is zero and its covariance matrix is the identity. By  $ E_{g} $ we mean  the expecting values on $ g $, and set
%$$\displaystyle  Z(g) = \int \exp(L(g))\dfrac{\psi(b)db}{\Vert f(b) \Vert} ,$$
%$$\displaystyle   L(b)=\dfrac{m((g + a_{0}f(b_{0}))*f(b))^{2}}{2\Vert f(b) \Vert^{2}} .$$
$(g) = \int \exp(L(g))\dfrac{\psi(b)db}{\Vert f(b) \Vert},$
with $  L(b)=\dfrac{m((g + a_{0}f(b_{0}))*f(b))^{2}}{2\Vert f(b) \Vert^{2}} .$

Now, considering,  at the beginning of our example, the synaptic function $ f(b,x) $ in terms of its expansion in orthonormal basis $ e_{j} $, one has that
\begin{center}
$ P(y \vert x,a,b)=\dfrac{1}{\sqrt{2\pi}} \exp \left( \dfrac{-1}{2}(y - \sum_{j=1}^{N} ab_{j}e_{j}(x))^{2}\right).  $
\end{center}
In this case for the synaptic function $ \Psi $. When $ N \geqslant 2 $, in a model of true regression, see ref. \cite{SWatanabe42001}, this machine is singular with parameter space $ W_{0}=\lbrace (a,b), a=0, b=0, : a \in \mathbb{R}, b \in \mathbb{R}^{N} \rbrace$. Using the toric resolution, it is  parametrized by means $ \omega_{i}=ab_{i} $ and substituting in the model one has:
\begin{center}
$ P(y \vert x,a,b)=\dfrac{1}{\sqrt{2\pi}} \exp\left( \dfrac{-1}{2}(y - \sum_{j=1}^{N} \omega_{j}e_{j}(x))^{2}\right)  $.
\end{center}
With this resolution in the parameter space,  the model has become a regular model, with learning coefficient given by $ \lambda(\omega_{0}) = N/2$ for an arbitrary parameter $ \omega_{0} $. Meanwhile, without this toric resolution the learning coefficient would be given by the above expression  $ \lambda( a_{0} ,b_{0}) $. Clearly, $ \lambda(a_{0},b_{0}) \neq \lambda(\omega_{0})  $. With the previous facts, one see that  the \textbf{singularities into the parameter space play an important role in the statistical learning, \cite{SWatanabe22001}, \cite{SWatanabe32001}.}

\begin{theorem}\label{Corollary 4.3.3} Let  $P(y\vert x, \omega)  $ be   a non singular  statistical learning machine, see Definition \ref{Definition 4.2.2.}, in the probability space $ (\Omega, \mathbb{F}, P)  $, and  consider the  Kullback distance associated with its parameters space; i.e.,   $ \lbrace \lambda \in W \, \subset \, \Omega: H(\lambda)=J(\lambda) =0 \rbrace $. Then,  the following polynomial is a parametrization such that for  each  $ \Theta \, \subset \, \Omega$, there exists $ \, \omega \,\in \, \Theta \subset \Omega \subset \mathbb{C}^{n},\,   $ as we have seen, $ H(\omega)=\sum_{i=1}^{n} c_{i}\omega^{a} $  with $ c_{i} \,\in \, \mathbb{R} ,\, \omega^{a} \, \in \, \mathbb{C^{*}}$, {} for each $i$, and  $ a \, \in \, \mathbb{Z}^{n} $ being the lattice vector (see Definition \ref{Definition 2.1.2.}, Newton polytope) if and only if the lattice cone $ \sigma \subseteq \, supp(H(\omega)) $, generated by the lattice vector of support  of $ H(\omega) $, is not singular; i.e.,
\textbf{Det} $ (\sigma)=1 $.\end{theorem}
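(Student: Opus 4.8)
The plan is to prove the biconditional by translating the statistical statement that the machine is non-singular into the geometric statement that the affine toric chart carrying the relevant parameter is smooth, and then to use Theorem \ref{Theorem 2.3.0.} to convert smoothness of the chart into the arithmetic condition $\textbf{Det}(\sigma)=\pm 1$ on the generators of $\sigma$. The central observation is that the two a priori different notions of singularity --- degeneracy of the Fisher information matrix (Definition \ref{Definition 4.2.2.}) and the toric singularity of Definition \ref{Definition 2.4.0.} --- are linked through the resolution $g\colon U\to W$ of Theorem \ref{Theorem of Hironaka-Atiyah}, which writes $H(g(u))=A(u)^{2}$ with $A(u)=u_{1}^{k_{1}}\cdots u_{d}^{k_{d}}$ a normal-crossing monomial. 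First I would record that the resolved variety $U$ may be taken to be the toric variety whose charts are the $X_{\sigma^{\vee}}$ built from the support of $H$, so that the monomial form of $A(u)$ is precisely the statement that $H(\omega)=\sum_{i}c_{i}\,\omega^{a}$ pulls back to a single Laurent monomial on each chart.

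For the forward implication, assume the machine is non-singular and $H$ admits the monomial parametrization $H(\omega)=\sum_{i=1}^{n}c_{i}\omega^{a}$. Working on the chart $X_{\sigma^{\vee}}$ that contains the parameter $\omega$, non-singularity means by Definition \ref{Definition 4.2.2.} that $\det I(\omega)\neq 0$; I would show this forces the corresponding point of $X_{\sigma^{\vee}}$ to be non-singular in the sense of Definition \ref{Definition 2.4.0.}, i.e. the chart has the form $\mathbb{C}^{k}\times(\mathbb{C}^{*})^{n-k}$. By the worked example following Definition \ref{Definition.}, this form occurs exactly when $\sigma=\mathrm{Con}(e_{1},\dots,e_{k})$ up to lattice isomorphism. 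Applying Theorem \ref{Theorem 2.3.0.} (the implication c) $\Rightarrow$ a)), $X_{\sigma}\simeq \mathbb{C}^{k}\times(\mathbb{C}^{*})^{n-k}$ gives $\sigma\simeq\mathrm{Con}(e_{1},\dots,e_{k})$, so there is a uni-modular $L$ carrying the standard generators onto those of $\sigma$; since $\textbf{Det}\,L=\pm 1$, the generators of $\sigma$ satisfy $\textbf{Det}(\sigma)=\pm 1$, which is the claimed regularity.

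For the converse, suppose $\textbf{Det}(\sigma)=\pm 1$, where $\sigma$ is generated by the support vectors of $H(\omega)$. Then these vectors are primitive and extend to a $\mathbb{Z}$-basis of the lattice, so $\sigma$ is a regular cone isomorphic to $\mathrm{Con}(e_{1},\dots,e_{k})$ by a uni-modular transformation. Theorem \ref{Theorem 2.3.0.} (implication a) $\Rightarrow$ c)) then yields $X_{\sigma}\simeq\mathbb{C}^{k}\times(\mathbb{C}^{*})^{n-k}$, so by Definition \ref{Definition 2.4.0.} every point of the chart is non-singular; reversing the bridge above, $\det I(\omega)\neq 0$ and the machine is non-singular. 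The monomial parametrization of $H$ is then read off from the normal-crossing form $A(u)=u_{1}^{k_{1}}\cdots u_{d}^{k_{d}}$ of Theorem \ref{Theorem of Hironaka-Atiyah}, since on the smooth chart the pullback of $H(\omega)=\sum_{i}c_{i}\omega^{a}$ is a single monomial in the new coordinates, i.e. its initial form $in_{\omega}(H)$ (Definition \ref{Definition 2.1.1}) is a monomial.

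The hard part will be the bridge in both directions between $\det I(\omega)$ and the smoothness of the chart $X_{\sigma^{\vee}}$. Making this rigorous requires showing that the Fisher information matrix degenerates along $W_{0}$ exactly when the resolution map $g$ fails to be an isomorphism there, equivalently when the cone $\sigma$ generated by $\mathrm{supp}(H)$ is not uni-modular; this is where the analytic hypotheses (Condition (A) and the normal-crossing resolution $H(g(u))=A(u)^{2}$) must be combined with the toric dictionary of Theorem \ref{Theorem 2.3.0.}, and it is the step where the identification of $W_{0}$ with a toric sub-variety has to be justified carefully rather than merely asserted.
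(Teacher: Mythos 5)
Your proposal is organized around the right tool (Theorem \ref{Theorem 2.3.0.}), but it has a genuine gap that you yourself flag in the last paragraph and never close: the ``bridge'' between $\det I(\omega)\neq 0$ and smoothness of the toric chart. In your argument this bridge is load-bearing in \emph{both} directions --- the forward implication begins ``non-singularity means $\det I(\omega)\neq 0$; I would show this forces the point of $X_{\sigma^{\vee}}$ to be non-singular,'' and the converse ends ``reversing the bridge above, $\det I(\omega)\neq 0$'' --- so without it neither direction is a proof. The paper does supply this link, and it is the only genuinely statistical step in its argument: in the ``Proof of the affirmation'' it argues (in contrapositive form) that if $H(\omega)$ is singular then the machine is non-identifiable in the sense of Definition \ref{Definition 4.1.8}, hence $P(y\vert x,\omega)=q(y\vert x)$ on the true-parameter set, and applying $\nabla_{\omega}$ gives $\nabla_{\omega}P(y\vert x,\omega)=\nabla_{\omega}q(y\vert x)=0$, which kills every entry of the Fisher information matrix and forces $\det I(\omega)=0$. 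Whatever one thinks of the rigor of that argument, it is the ingredient your proposal is missing; an argument of this kind (relating degeneracy of $I(\omega)$ to the vanishing locus $W_{0}$ of $H$) must be written down, not deferred.

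There is also a structural difference worth noting. The paper never invokes the chart criterion of Definition \ref{Definition 2.4.0.} (the $\mathbb{C}^{k}\times(\mathbb{C}^{*})^{n-k}$ form) that your proof routes through. Instead, for the forward direction it uses the Hironaka--Atiyah resolution map $g$ to induce an isomorphism of the monomial algebras $R_{\sigma'}\simeq R_{\sigma}$ attached to $\mathrm{supp}(H(\lambda))$ and $\mathrm{supp}(H(\omega))$, and then applies the implication (b) $\Rightarrow$ (a) of Theorem \ref{Theorem 2.3.0.} to get $\sigma'\simeq\sigma$ via a uni-modular $L$, whence $\textbf{Det}(\sigma)=1$; for the converse it runs (a) $\Rightarrow$ (b) to manufacture the toric morphism $\psi$, declares $g=\psi$ a resolution map, and only then deals with the Fisher matrix. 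Your route through smoothness of charts, i.e.\ (c) $\Leftrightarrow$ (a), is a reasonable alternative in principle --- it buys a cleaner geometric picture of what ``non-singular'' means --- but it adds a second unproved identification (that the point $\omega$ of the statistical parameter space sits in the chart $X_{\sigma^{\vee}}$ built from $\mathrm{supp}(H)$, and that smoothness there is equivalent to $X_{\sigma}\simeq\mathbb{C}^{k}\times(\mathbb{C}^{*})^{n-k}$ globally) on top of the missing Fisher bridge, so as written it is further from a complete argument than the paper's version.
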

\begin{proof}  The need is a consequence of the following facts. As  $ H(\omega) $ is a re parametrization of singular polynomial $ H(\lambda) $ then  there exists  a resolution map $ g: H(\lambda) \, \longrightarrow \, H(\omega) $, by the Theorem \ref{Theorem of Hironaka-Atiyah}, such that $ H(\omega) $ is not singular. Now, let $ \sigma' \subset \, supp(H(\lambda)) $ and let  $ \sigma \subset \, supp(H(\omega)) $ be  lattice cones generated by the lattice vectors of support of $ H(\lambda) $ and $ H(\omega) $, respectively. Then, we affirm that $ g $ induces  a morphism of monomial generated finitely $ \mathbb{C}-$ algebras: $ R_{\sigma'}= \lbrace f \vert \, supp(f)\subset \, \sigma' \rbrace $ and $ R_{\sigma}= \lbrace f \vert \, supp(f)\subset \, \sigma \rbrace  $,  such that $ H(\lambda)\,\in \, R_{\sigma'} $ and $ H(\omega) \, \in \, R_{\sigma}$. The proof of this  follows from its definition as resolution map. Then Theorem \ref{Theorem 2.3.0.}, implies that  $ R_{\sigma'} \simeq R_{\sigma} \, \Longrightarrow \, \sigma' \simeq \sigma$. Therefore there exists  an uni modular transformation $ L \, \in \, \mathbb{Z}^{n} \times \mathbb{Z}^{n} $ with a matrix associated to the Hilbert basis $ H_{(\sigma')^{\vee}} $, with dual cone $ (\sigma')^{\vee} \, \Longrightarrow \, L(\sigma')=\sigma$ and with uni modularity of this transformation. Then \textbf{Det} $L(\sigma')=$ \textbf{Det} $ \sigma=1 \, \Longrightarrow \,\sigma$ is not singular.\\
 Reciprocally, let $ \sigma \subset \, supp(H(\omega)) \, \ni $ \textbf{Det} $ \sigma =1 \, \Longrightarrow \,\exists \, L \in \, \mathbb{Z}^{n} \times \mathbb{Z}^{n} \, \ni \, L(\sigma')=\sigma $ be a lattice cone  with $ L $ being an uni modular transformation, for some cone $ \sigma' \subset \, \mathbb{Z}^{n} \, \Longrightarrow \, \sigma'\simeq \sigma$. Then by Theorem \ref{Theorem 2.3.0.}, one has that this isomorphism lifts to a toric morphism $ \psi $ such that $ R_{\sigma'}\simeq R_{\sigma} $ to make it compatible with the previous notation, we do $ g=\psi $. Since the morphisms of monomial finitely generated $ \mathbb{C}-$ algebras are torics as it has been said before, then $ g $ is a resolution map of $ H(\omega) $ and therefore  this polynomial is not singular. We will prove that the Fisher information matrix $ I(\omega) $ associated to the model is not singular, i.e., \textbf{Det} $ I(\omega)\neq 0 $.\\
\textbf{Proof of the affirmation.} If $ H(\omega) $ is singular, then  the inference machine  $ P(y \vert x,\omega) $ is a model non iden\-ti\-fia\-ble, by Definition \ref{Definition 4.1.8}. Therefore, $ P(y \vert x,\omega)=q(y\vert x) $ where $ q(y \vert x) $ is the true statistical inference of the model. Applying the operator $ \nabla_{\omega} $ to  $P(y \vert x,\omega)  $, from Definition of Fisher information matrix (Definition \ref{Definition 4.2.1}), one has that  $\nabla_{\omega} P(y \vert x,\omega)= \nabla_{\omega} q(y\vert x) = 0  \,\Longrightarrow$ \textbf{Det} 
$ I(\omega)=0 $, which concludes the proof, \textbf{q.e.d.} \end{proof}

\section{Applications in singular machines.}

We present an application to the learning curve in the following singular machine.\\
\textbf{Application A.} Consider  the polynomial which represents the learning curve of a perceptron of two layers $ H(a,b,c)= a^{2}b^{2}+ 2abc + c^{2} +  3a^{2}b^{4}, \ (a,b,c) \in \mathbb{R}^{3} $ which is singular  in its parameter space, $ (0,0,0) \in \mathbb{R}^{3} $. Establishing the Newton polytope,   defined by $ supp(H) $, one has the lattice cone: $ \sigma=Con ((2,2,0),(1,1,1),(0,0,2),(2,4,0)) $.\\
Now,  we get the following associated dual cone, see Theorem \ref{Theorem 2.3.0.}:
\begin{center}
$ \sigma^{\vee}= Con(2e_{1} - e_{2} - e_{3},-e_{1} + e_{2}, e_{3}) $,
\end{center}
 which gives the  Hilbert basis associated to monoid $ \sigma \cap \mathbb{Z}^{3}$.
\begin{center}
$ \mathrm{H}_{\sigma^{\vee}} = \lbrace e_{1} + e_{2},e_{1} + e_{2} + e_{3}, e_{1} + 2e_{2} \rbrace $.
\end{center}
From Theorem \ref{Theorem 2.3.0.}, one obtains  the geometric realization of the affine toric variety $ X_{\sigma'^{\vee}} $,\\
\begin{center}
$X_{\sigma'^{\vee}} $ = \textbf{Spec}$(\mathbb{C}[S_{\sigma'^{\vee}} \cap \mathbb{Z}^{3}])= \textbf{Spec}(\mathbb{C}[u_{1}u_{2},u_{1}u_{2}u_{3},u_{1}u^{2}_{2}])$.
\end{center}

One chooses the set of generators of $ X_{\sigma'^{\vee}} $, which forms a uni modular matrix $ A=Columns((1,1,0)',(1,1,1)',(1,2,0)') $, and   parametrize this system by means of monomials of Laurent.\\

By  Theorem \ref{Theorem of Hironaka-Atiyah}, one  obtains  the resolution map, 
\begin{center}
$ g_{1}: (a,b,c) \longrightarrow (u_{1}u_{2},u_{1}u_{2}u_{3},u_{1}u^{2}_{2})/(0,0,0) $, 
\end{center}
such that,\\
 \begin{center}
$ H(g_{1}(u_{1},u_{2},u_{3}))= u^{4}_{1}u^{4}_{2}u^{2}_{3} + 2u^{3}_{1}u^{4}_{2}u_{3} + u^{2}_{1}u^{4}_{2} + 3u^{6}_{1}u^{6}_{2}u^{4}_{3}$\\
$\qquad \quad\quad\quad\quad\quad=u^{2}_{1}u^{4}_{2}(u^{2}_{1}u^{2}_{3} + 2u_{1}u_{3} + 1 + 3u^{4}_{1}u^{2}_{2}u^{4}_{3}) $\\

$\quad\quad\quad\quad\quad=u^{2}_{1}u^{4}_{2}((u_{1}u_{3} + 1)^{2} + 3u^{4}_{1}u^{4}_{3}u^{2}_{2}) $\\

$\qquad =c^{2}_{1}((b_{1} + 1)^{2} + 3b^{4}_{1}d^{2}_{1}) $\\

$ \quad\quad=c^{2}_{1}(b'^{2}_{1} + 3(b'_{1} - 1)^{4}d^{2}_{1}) $\\

$=c^{2}_{1}(b'^{2}_{1} + 3e^{4}_{1}d^{2}_{1}) $.

\end{center}
By applying a second time the technique of resolution by means of Hilbert basis to the polynomial defined by $ h(b'_{1},d_{1},e_{1})=b'^{2}_{1} + 3e^{4}_{1}d^{2}_{1} $,  where  the basis Hilbert associated to Newton polytope of $ supp(h) $, in this manner, one has  that $ \mathrm{H_{\sigma^{\vee}}}=\lbrace e_{1},e_{1} + e_{2},e_{1} + 2e_{2} \rbrace $, and as consequence of the resolution map,
\begin{center}
$g_{2}:(b'_{1},d_{1},e_{1}) \longrightarrow (s_{1},s_{1}s_{2},s_{1}s^{2}_{2})/(0,0,0)$.
\end{center}
Then, one obtains the affine toric variety; $ X_{\sigma^{\vee}}= Spec\mathbb{C}[s_{1},s_{1}s_{2},s_{1}s^{2}_{2}] $, and so,
\begin{center}
$ H(g_{1}(g_{2}(s_{1},s_{2},s_{3})))=c^{2}_{1}s^{2}_{1}(1+ 3s^{4}_{1}s^{8}_{2}s^{2}_{2}) $\\
 $ =c^{2}_{1}s^{2}_{1}(1 + 3s^{4}_{1}s^{10}_{2}) $,
\end{center}
which is not singular in $ (0,0,0) \in \mathbb{R}^{3} $. This same fact is proved in, S. Watanabe, see, ref. \cite{SWatanabe42001}, \cite{SWatanabe62001}.\\

\textbf{Application (B). Mix of binomial distributions.} This kind of statistical learning is used by the spectral analysis of mutations, see \cite{MAoyaguiWatanabe2005}, and the  statistical machine is cha\-rac\-te\-ri\-zed by the following probabilities: \\

True probability of $ x $, $ q(x=k)=Bin_{N} (x;p^{*})= \binom {N}{x}p^{*x} (1 - p^{*})^{N - x}$.\\

True probabilistic inference of model $ P(x=k \vert w)= aBin_{N}(x,p_{1}) + (1 - a) Bin_{N}(x,p_{2}) $.\\

Parameter space $ w$ is defined by: 
%\begin{displaymath}
\begin{center}
$ w = (\lbrace  a_{i}  \rbrace^{K}_{i=1}, \lbrace p_{i} \rbrace^{K+1}_{i=1}) $,
\end{center}
%\end{displaymath}
where  coordinates of the parameters $ p_{i} $ are defined in the range $ 0 < p_{i} <1/2 $ and,
%\begin{displaymath}
\begin{center}
$ a_{K+1}= 1 - \sum^{K}_{i=1} a_{i} $.
\end{center}
%\end{displaymath}
There holds  the following theorem proved by Watanabe, see ref., \cite{KYamazakiWatanabe2004}, immediately, the same result is proven but using Hilbert basis and toric morphism: \\
 
\begin{theorem}\label{Theorem} Consider a learning machine characterized by the probabilities defined above, then for a number large enough $ n $ of training examples, in accordance with Corollary \ref{Corollary 4.3.1}, its learning curve is given by: 
%\begin{displaymath}
\begin{center}
$ K(n)= \dfrac{3}{4} \log(n) + C $,
\end{center}
%\end{displaymath}
where $C$ is independent of $ n $.\\
\end{theorem}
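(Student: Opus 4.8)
The plan is to compute the learning coefficient $\lambda_1$ for the mixture-of-binomials machine and show it equals $3/4$, so that by Corollary \ref{Corollary 4.3.1} the leading term of the learning curve is $(3/4)\log n$. The strategy follows the same toric-resolution pipeline used in Application A: first identify the Kullback distance $H(w)$ as an analytic function vanishing on the true-parameter set $W_0$, then reduce $H(w)$ to a polynomial whose Newton polytope (Definition \ref{Definition 2.1.2.}) controls the poles of $J(\lambda)=\int_{H(w)<\epsilon} H(w)^{\lambda}\varphi(w)\,dw$ from Theorem \ref{(2) Theorem, Watanabe}.

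First I would specialize to the relevant case $K=1$, so that the inference probability is $P(x\mid w)=a\,\mathrm{Bin}_N(x,p_1)+(1-a)\,\mathrm{Bin}_N(x,p_2)$ against a single true binomial $q(x)=\mathrm{Bin}_N(x,p^*)$, and expand the Kullback distance $H(w)$ to leading order around a point of $W_0$. Because the binomial $\mathrm{Bin}_N(x,p)$ is a polynomial in $p$, the difference $P(x\mid w)-q(x)$ is polynomial in the parameters $(a,p_1,p_2)$, and $H(w)$ behaves (up to positive analytic factors) like a sum of squares of these moment differences $\sum_{x=0}^N\bigl(P(x\mid w)-q(x)\bigr)^2$. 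I would extract the lowest-degree homogeneous part of this expression at the singular locus, obtaining an explicit polynomial $h$ whose support generates a lattice cone $\sigma$.

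Next I would apply the Hilbert-basis resolution exactly as in Application A: compute the dual cone $\sigma^{\vee}$, read off the Hilbert basis $\mathrm{H}_{\sigma^{\vee}}$ of the monoid $\sigma^{\vee}\cap\mathbb{Z}^n$ (Lemma \ref{Lemma 2.1.1.}), and use Theorem \ref{Theorem 2.3.0.} to produce a monomial resolution map $g$ under which $H(g(u))$ becomes a normal-crossings monomial times a unit, as guaranteed by Theorem \ref{Theorem of Hironaka-Atiyah}. From the resulting monomial $u_1^{s_1}\cdots u_n^{s_n}$ and the Jacobian of $g$ (contributing the factor $\varphi'(\omega)$ in the integrand), the largest pole $-\lambda_1$ of $J(\lambda)$ is computed by the standard one-variable Mellin estimate over each coordinate, taking the minimum over coordinates of $(k_j+1)/s_j$ where $k_j$ are the Jacobian exponents. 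I expect this bookkeeping to yield $\lambda_1=3/4$ and multiplicity $m_1=1$.

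The main obstacle will be the first reduction: correctly identifying the lowest-order form of $H(w)$ at the singular set, since the mixture model has a nontrivial true-parameter variety $W_0$ (the usual label-swapping and degenerate-mixing-weight directions cause $P(x\mid w)=q(x)$ along several components). One must choose the resolution chart adapted to the deepest singularity and verify that the Newton polytope captured by $supp(H)$ genuinely governs the smallest pole rather than an artifact of a poorly chosen chart; matching this against Yamazaki--Watanabe \cite{KYamazakiWatanabe2004} is the consistency check. Once the correct cone $\sigma$ with $\textbf{Det}(\sigma)=1$ is found and $\lambda_1=3/4$ extracted, substitution into Corollary \ref{Corollary 4.3.1} gives $K(n)=\tfrac{3}{4}\log n + C$ with $C$ independent of $n$, completing the proof.
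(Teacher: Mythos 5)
Your proposal follows essentially the same route as the paper's proof: expand the Kullback distance as a sum of squared moment differences, form the lattice cone from the support of its lowest-order part (the paper gets $\sigma = Con((0,0,2),(1,2,0),(1,1,1))$), resolve via the Hilbert basis of the dual cone ($\mathrm{H}_{\sigma^{\vee}} = \lbrace e_{3},\, e_{1}+e_{2}+e_{3},\, e_{1}+2e_{2} \rbrace$, giving the monomial map $a=w_{3}$, $b_{1}=w_{1}w_{2}w_{3}$, $b_{2}=w_{1}w_{2}^{2}$), and extract the largest pole of $J(z)$ from the normal-crossings monomial times the Jacobian exactly as your $\min_{j}(k_{j}+1)/s_{j}$ bookkeeping prescribes, yielding $J(z)=f(z)/(4z+3)$, hence $\lambda_{1}=3/4$, $m_{1}=1$, and the stated learning curve. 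The only difference is that the paper carries out these computations explicitly while you leave them as expected bookkeeping; the pipeline and conclusion are identical.
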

\textbf{Proof.} The  Kullback information distance is given by: 

\begin{center}
$ \mathrm{H}(x,a,b_{1},b_{2})= \sum^{N}_{x=0} q(x) log\left( \dfrac{q(x)}{P(x \vert \omega)} \right) $
$ =(ap_{1} + (1 - a)p_{2})^{2} + (ap^{2}_{1} + (1 - a)p^{2}_{2})^{2} +....+$\\
$ =b^{2}_{2} + (ab^{2}_{1} + (b_{2} - ab_{1})^{2})^{2} +...+$ major order terms.
\end{center}

which is singular in $ (0,0,0) \in \mathbb{R}^{3}$. According to Theorem  \ref{Theorem of Hironaka-Atiyah}  and Hilbert basis lemma, and to our technique with toric morphisms, one can see that the previous  polynomial is generated by the ideal  $ \mathrm{I}< \mathbb{C}[a,b_{1},b_{2}] $\\
\begin{center}
$ \mathrm{I}:= < b_{2}^{2}; ab_{1}^{2}; ab_{1}b_{2}> $
\end{center}
and the lattice cone
\begin{center}
$ \sigma = Con((0,0,2);(1,2,0);(1,1,1)). $
\end{center}
Computing  the  geometric realization of the affine toric variety $ X_{\sigma^{\vee}} $, associated with monoid $ S_{\sigma}= \sigma^{\vee} \cap \mathbb{Z}^{3} $ and with Hilbert basis,  one obtains \\
%\begin{displaymath}
\begin{center}
$ \mathrm{H}_{\sigma^{\vee}}= \lbrace e_{3}; e_{1} + e_{2} + e_{3}; e_{1} + 2e_{2}  \rbrace $,
\end{center}
%\end{displaymath}
where the toric variety is: 
%\begin{displaymath}
\begin{center}
$ X_{\sigma^{\vee}} =$\textbf{Spec}$\mathbb{C}[w_{3},w_{1}w_{2}w_{3},w_{1}w_{2}^{2}] $,
\end{center}
%\end{displaymath}
and the coordinate system: 
%\begin{displaymath}
\begin{center}
$a= w_{3} ; $\\
$ b_{1}=w_{1}w_{2}w_{3}; $\\
$ b_{2}=w_{1}w_{2}^{2}$,
\end{center}
%\end{displaymath}
and  using  this parametrization we get the resolution map $ g : X_{\sum'} \longrightarrow X_{\sum} $  such that  $ \mathrm{H}(g(w)), w=(w_{1},w_{2},w_{3}) \in \mathbb{R}^{3}$ is not singular in $ (0,0,0) $ and from Theorem \ref{Theorem of Hironaka-Atiyah};

$ \mathrm{H}(g(w))= w_{1}^{2}w_{2}^{4} + (w_{3}w_{1}^{2}w_{2}^{2}w_{3}^{2}+ (w_{1}w_{2}^{2} - w_{1}w_{2}w_{3}^{2})^{2})^{2} +...+$ order major terms),\\
$=w_{1}^{2}w_{2}^{4} + [w_{1}^{2}w_{2}^{2} + (w_{3}^{3}+(w_{2} -w_{3}^{2})^{2})]^{2} +...+$ order major terms,\\
$ =w_{1}^{2}w_{2}^{4} + w_{1}^{4}w_{2}^{4}[w_{3}^{3}(w_{2}-w_{3}^{2})^{2}]^{2} +...+$ order major terms.\\
$ =w_{1}^{2}w_{2}^{4}(1 + w_{1}^{2}(w_{3}^{6} +2w_{3}^{3}(w_{2}- w_{3}^{2})^{2}+(w_{2}-w_{3})^{4} +...+$ order major terms).

It is easy to see that writing the terms of integration  of $ J(z) $,  we get, 
%\begin{displaymath}
\begin{center}
$ \displaystyle J(z)=\int H(g(w))^{z} \vert g'(w) \vert du$
$  \displaystyle  =\int ((1 + w_{3}^{2}w_{1}^{2}+...)w_{2}^{4}w_{1}^{2})^{z} \vert w_{2}^{2}w_{1} \vert dw_{1}dw_{2}dw_{3} $
$ =\dfrac{f(z)}{4z + 3} $,
\end{center}
%\end{displaymath}
where the most large pole of $ J(z) $ is $ \lambda_{1}=\dfrac{3}{4} $ and multiplicity $ m_{1}=1 $, then the learning curve is given by: \\
\begin{center}
$ K(n)=\dfrac{3}{4} \log(n) + C $;   
\end{center}
\textbf{q.e.d.} see ref. \cite{KYamazakiWatanabe2004}.\\

\textbf{Application (C).} The following application is the toric resolution in a perceptron of three layer on the learning curve of the same, for details of the computing of this learning curve, see, Watanabe \cite{SWatanabe42001}. We define the machine in the space probability $ (\Omega, \mathbb{F}, P) $ as follows: \\
\begin{enumerate}
\item \textit{A priori} probability distribution, $ \varphi(\omega) > 0 $.
\item Predictive probability of the vector $ y \in \mathbb{R}^{N} $, 
\begin{center}
$P(y \vert x,\omega)=\dfrac{1}{(2\pi s^{2})^{N/2}}\exp \left( \dfrac{-1}{2s^{2}} \Vert y - f_{k}(x,\omega)\Vert^{2} \right)$, 
\end{center}
with $ x \in \mathbb{R}^{M} $ and $ s > 0 $ is the standard deviation.
\item True probability distribution of model, 
\begin{center}
$ q(y \vert x)q(x) = \dfrac{1}{(2\pi s^{2})^{N/2}}\exp \left( -\dfrac{1}{2s^{2}}\Vert y \Vert^{2}\right)  q(x)$. 
\end{center}
\end{enumerate}
First we compute the Kullback distance of this machine is defined as follows, 
\begin{center}
$ \displaystyle  H(a,b,c)= \dfrac{1}{2s^{2}}\int \Vert f_{K}(x,a,b,c) \Vert^{2} q(x) dx  $ 
 $ =\sum^{N}_{p=1} \sum^{K}_{h,k=1} B_{hk}(b,c) a_{hp} b_{kp} $.
\end{center}
with parameter space associated, and function of the hidden units given by, $ f_{K}(x,\omega)= \sum^{K}_{k=1} a_{k} \sigma(b_{k} x + c_{k}) $ {}  :
\begin{center}
$ a=\lbrace a_{k} \in \mathbb{R}^{N}; k=1,2...,K   \rbrace $\\
$ b= \lbrace b_{k} \in \mathbb{R}^{M}; k=1,2,...,K \rbrace $\\
$ c=\lbrace c_{k} \in \mathbb{R}; k=1,2,...,K \rbrace $,\\
$ a_{k}=\lbrace a_{kp} \in \mathbb{R}; p=1,2...,N    \rbrace $\\
$ b_{k}=\lbrace b_{kp} \in \mathbb{R};q=1,2,...,M \rbrace $.
\end{center}
defining $ \displaystyle   B_{hk}(b,c)=\dfrac{1}{2s^{2}}\int \sigma(b_{h}*x + c_{h})\sigma(b_{k}*x + c_{k})q(x)dx $, where $ \sigma(x)=tanh(x) $ is a synaptic function, see for details of the formulation, Watanabe \cite{SWatanabe42001}. Thus developing terms.
\begin{center}
$  H(a,b,c)=\sum_{p=1}^{N}(B_{11}(b,c)a_{1p}a_{1p} + B_{22}(b,c)a_{2p}a_{2p}+...+B_{KK}a_{Kp}a_{Kp})$\\
$ =B_{11}a_{11}^{2}+B_{22}a_{21}^{2}+....+B_{KK}a_{K1}^{2}$\\
$+B_{11}a_{12}^{2}+B_{22}a_{22}^{2}+....+B_{KK}a_{K2}^{2}  $\\
$ +B_{11}a_{13}^{2}+B_{22}a_{23}^{2}+....+B_{KK}a_{K3}^{2} $\\
$ +B_{11}a_{14}^{2}+B_{22}a_{24}^{2}+...+B_{KK}a_{K4}^{2}+ $\\
$ .............+......+......$\\
$ +B_{11}a_{1N}^{2}+B_{22}a_{2N}^{2}+....+B_{KK}a_{KN}^{2}. $
\end{center}
This polynomial seen in the coordinates $ a_{hk}\in\mathbb{R} $, is singular in $ (0,0,...,0)\in\mathbb{R} ^{N} $, we construct a toric resolution in this coordinates utilizing the concept of projective sets as previously we have seen. We define the affine charts utilizing the following projective set, 
\begin{center}
$ U_{j}=\lbrace[a_{11},...,a_{1N},a_{21},...,a_{2N},...,a_{K1},...,a_{KN}]\in \mathbb{R} P^{KN-1}:a_{jj}\neq 0\rbrace$.
\end{center}
Where $ \mathbb{R} P^{KN-1}$ is the real projective space $ KN-1 $-dimensional, and also there exists a bijection as we have seen, the affine real space $ KN-1 $-dimensional  $ \mathbb{R}^{KN-1} $, given by, 
\begin{center}
$ U_{j}:\mathbb{R} P^{KN-1}\longmapsto \mathbb{R} ^{KN-1}   $
\end{center}
\begin{center}
$  U_{j}:[a_{11},...,a_{1N},a_{21},...,a_{2N},...,a_{K1},...,a_{KN}]\longmapsto
(1,a_{12}a_{11}^{-1},...,a_{1N}a_{11}^{-1},a_{21}a_{11}^{-1},...,a_{2N}a_{11}^{-1},...
,a_{K1}a_{11}^{-1},...,a_{KN}a_{11}^{-1})$
\end{center}
now in these projective coordinates we redefine $ H(a,b,c)=u_{11}^{2}H_{1}(a,b,c) $, since:
\begin{center}
$ H(a,b,c)=a_{12}^{2}u_{11}^{2}B_{11}+a_{13}^{2}u_{11}^{2}B_{11}+...+a_{1N}^{2}u_{11}^{2}B_{11}+a_{21}^{2}u_{11}^{2}B_{22} +...+ a_{2N}^{2}u_{11}^{2}B_{22}+...+
a_{k1}^{2}u_{11}^{2}B_{KK}+...+a_{KN}^{2}u_{11}^{2}B_{KK}$
\end{center}
where the new coordinates in the affine space $ \mathbb{R} ^{KN-1} $ son $ (u_{11},a_{12},...,a_{1N},...,a_{K1},...,a_{KN})\\
\in \mathbb{R} ^{KN-1} $, in this new coordinate ring  we construct the lattice cone of Newton polytope associated in the re parametrized polynomial, which give us as: $ \sigma=Con(2e_{1}+2e_{2},...,2e_{1}+2e_{1N},...,
2e_{1}+2e_{K1},...,2e_{1}+2e_{KN}) $, that in matrix way give rises the following array associated to the cone,
\\
\\
\\\\
$ A_{\sigma}= $
\begin{center}
\begin{tabular}{c c c c c c c}
2&2&0&.&.&.&0 \\
2&0&2&.&.&.&0\\
2&0&0&2&.&.&0\\
.&.&.&.&.&.&.\\
.&0&.&.&.&2&0\\
2&0&.&.&.&0&2\\\\\\
\end{tabular}
\end{center}

It possible to show in a inductive way, and using of Singular program, \cite{DGPS}, that the Hilbert basis associated to this lattice cone, are given by the following matrix array,\\\\

$ H_{\sigma^{\vee}}= $
\begin{center}
\begin{tabular}{c c c c c c c}
1&0&0&.&.&.&1 \\

1&0&.&.&.&1&0\\

1&0&.&.&1&0&0\\

.&.&.&.&.&.&.\\
.&.&1&0&.&.&0\\
1&1&0&.&.&.&0\\\\

\end{tabular}
\end{center}
where the lattice vectors of this array represent us a regular lattice cone; and by The Theorem \ref{Theorem 2.3.0.}  and Theorem \ref{Theorem of toric modification}, we have a toric blow up or toric resolution, also we obtain the respectively toric variety $ X_{\sigma^{\vee}} $, taking as exponents the elements of this base for the constructing the monomial homomorphisms ( Theorem \ref{Theorem 2.3.0.} ), and so the following transformation of monomial coordinates:
\begin{center}
$ a_{11}=v_{11}; $\\
$  u_{11}=v_{11}^{-1}$,\\
$ a_{hp}=u_{11}*u_{hp}$; $ \forall $  $h\neq 1 $ \'{o} $p\neq 1$.
%$\qquad (h\neq 1 \: \'{o} \: p\neq 1 ). $

%$ a_{hp}=u_{11}*u_{hp} \mathaccent \qquad (h \neq 1 \:  \'{o} \: p \neq 1 ). $
\end{center}
which is the re parametrization shown in, Watanabe \cite{SWatanabe42001}. Furthermore we set a extra coordinate $ u_{11} $, being that we work with the projective set $ U_{j} $, where we construct from the affine chart;
\begin{center}
$ A_{0}=\lbrace(1,a_{12}a_{11}^{-1},...,a_{1N}a_{11}^{-1},a_{21}a_{11}^{-1},...,a_{2N}a_{11}^{-1},...
,a_{K1}a_{11}^{-1},...,a_{KN}a_{11}^{-1}) \vert a_{11} \neq 0 \in\mathbb{R}^{KN-1} \rbrace $, 
\end{center}
so explicitly we have the toric variety as the affine algebraic scheme 
\begin{center}
 $ X_{\sigma^{\vee}} = $\textbf{Spec}$(\mathbb{C}[\sigma^{\vee} \cap \mathbb{Z}^{KN-1}])=A_{0}  $;
\end{center} 
is enough to realize the toric resolution in this chart being that the toric morphism are proper, and they extend at all the variety. Finally by the Watanabe's theorems, computing the largest pole of zeta function,
\begin{center}
%\begin{displaymath}
$\displaystyle  J(z)=\int_{U(\delta)} H(g(u),b,c)^{z} \varphi_{0} \vert g'(u) \lambda du'dbdc $.\\
%$\displaystyle  = \varphi_{0} \int^{\delta}_{0} u^{2z}_{11} u^{NK-1}_{11} du_{11} \int H_{1}(u',b,c)^{z} du'dbdc$\\
%$\displaystyle  =\dfrac{\varphi_{0} \delta^{2z +NK}}{2z + NK} \int H_{1}(u',b,c)^{z} du'dbdc $.
%\end{displaymath}
\end{center}
In Watanabe is shown that this toric resolution is not complete and is necessary other resolution to the Kullback distance $ H(g(u),b,c) $ applying Hilbert basis again, now the monomial transformation is given by, 
\begin{center}
$ g :\lbrace u_{kp},v_{k}; 1\leqslant k \leqslant K; 1 \leqslant p \leqslant M  \rbrace  \mapsto \lbrace b_{kp},c_{k}; 1 \leqslant k \leqslant K; 1 \leqslant p \leqslant M \rbrace$.
\end{center}
that is defined by,
\begin{center}
$ b_{11}=u_{11} $\\
$\qquad \qquad \qquad \qquad  b_{kp}=u_{11}u_{kp}, \quad (k\neq 1)$ o $ (p\neq 1) $,\\
$ c_{k}= u_{11}v_{k} $.
\end{center}
Then by Atiyah-Hironaka theorem exists analytic function $ H_{2}(a,u',v) $, such that,
\begin{center}
$ H(a,b,c)= u^{2}_{11}H(a,u',v) $,
\end{center}
which implies;
%\begin{center}
%\begin{displaymath}
%$ \displaystyle J(z)=\varphi_{0} \int^{\delta}_{0} u^{2z}_{11} u^{(M+1)K-1}_{11} du_{11} \int H_{2}(a,u',v)^{z} du'dv $\\
%$\displaystyle  =\varphi_{0} \dfrac{\delta^{2z + (M+1)K}}{2z +(M+1)K}\int H_{2}(a,u',v)^{z} du'dv $.
%\end{displaymath}
%\end{center}
therefore $ \lambda_{1} \leqslant (M + 1)K/2 $.\\
Combining the results of above, the largest pole $ -\lambda_{1} $ of the poles of $ J(z)$ satisfies of inequality,
\begin{center}
$ \lambda_{1} \leqslant \dfrac{K}{2} min\lbrace N,M +1 \rbrace $,
\end{center}
With this information and corollary  \ref{Corollary 4.3.1}, we have the learning curve associated to the perceptron:\\
\begin{center}
%\begin{displaymath}
$ K(n) \leqslant \dfrac{K}{2} min\lbrace N,M+1  \rbrace \log(n) + o\left( \log (n)\right)  $.
\end{center}
We reproduce the first toric resolution in the first re parametrization of the Kulback  distance and its toric variety associated by means of Hilbert basis; and the second resolution is given in Watanabe \cite{SWatanabe42001}; but is possible apply the technique with Hilbert basis the necessary times to up having the wished resolution, agreement to the Hironaka's theorem, \cite{HHironaka1964}.

\thispagestyle{plain}
\section*{\begin{center}
Conclusions.
\end{center}}
 The principal conclusion of this work is the use of Theorem \ref{Theorem 2.3.0.}; as consequence, Theorem \ref{Corollary 4.3.3}, that are fundamentals for the formalization and for reproducing of results previously reported in statistical singular learning, S. Watanabe \cite{SWatanabe12001}, \cite{SWatanabe42001}, \cite{SWatanabe52001}. The practical applications of these theorems is obtained by means of the use of Hilbert basis with Singular program, ref., \cite{DGPS}. This open the doors to look for other perspectives of investigation for machines with a high dimensional parameter space important in data science.It should be clear that the algorithmic complexity for computing Hilbert basis is a topic of current interest in computational algebraic geometry, but its solution for lattice polytope with thousands of vertex may well help in the solution of many other problems beyond the examples presented here.\\\\


\begin{thebibliography}{1}


\bibitem{AtiyahMacDonald1969}
M.~Atiyah and I. G.~MacDonald.
%\newblock Information theory and an extension of the maximum likelihood
 % principle.
{\em Introduction to Conmmutative Algebra.}, Addison-Wesley, Reading, MA, 1969.




\bibitem{MAoyaguiWatanabe2005}
M.~Aoyagui and S.~Watanabe.
\newblock Resolution Of Singularities And Generalization Error With Bayesian Estimation For Layered Neural Network.
{\em Neural Networks.} Vol.J88-D-II, No.10, pp.2112-2124, 2005.
\bibitem{MAoyaguiWatanabe2005}
M.~Aoyagui and S.~Watanabe.
\newblock Generalization  Error Of Three Layered Learning Model In Bayesian Estimation.
{\em Proceedings Of The IASTED International Conference Computational Intelligence} San Francisco USA, pp.20-22, Nov. 2006.

\bibitem{ABronsted1983}
A.~Bronsted.
%\newblock Information theory and an extension of the maximum likelihood
 % principle.
{\em An Introduction to Convex Polytopes.} Springer-Verlag, New York 10010, 1983.
\bibitem{CMBishop1987}
C. M.~Bishop.
%\newblock Information theory and an extension of the maximum likelihood
 % principle.
{\em Neural Networks And Machine Learning.} Serie F: Computer and Systems Sciences, Springer, Vol. 168, Cambridge CB2 3NH, U.K, 1997.
\bibitem{DCoxLittleOshea2004}
D.~Cox, J.~Little and D.~O'Shea.
%\newblock Information theory and an extension of the maximum likelihood
 % principle.
{\em Using Algebraic Geometry} Springer, 2nd. Edition, August, 2004.

\bibitem{DCoxLittleOshea2004}
D.~Cox, J.~Little and D.~O'Shea.
%\newblock Information theory and an extension of the maximum likelihood
 % principle.
{\em Ideals, Varieties, and Algorithms (An Introduction to Computational Geometry and Conmmutative Algebra)} Springer, 2nd. Edition, August, 2004.

\bibitem{DCoxLittleSchenck2010}
D.~Cox, J.~Little and H.~Schenck.
%\newblock Information theory and an extension of the maximum likelihood
 % principle.
{\em Toric Varieties} Department Of Mathematics, Amherst College, Amherst MA 01002, 2010.

\bibitem[DGPS]{DGPS}
Decker, W.; Greuel, G.-M.; Pfister, G.; Sch{\"o}nemann, H.: 
\newblock {\sc Singular} {4-1-0} --- {A} computer algebra system for polynomial computations.
\newblock {http://www.singular.uni-kl.de} (2016).


\bibitem{LDGarciaStillmanSturmfels2005}
L. D.~Garcia, M.~Stillman and B.~Sturmfels.
\newblock Algebraic geometry of Bayesian networks
{\em Journal of Symbolic Computation.} Volume 39, Issues 3–4, Pages 331-355, March–April 2005.




\bibitem{GEwald1993}
G.~Ewald.
%\newblock Combinatorial Convexity and Algebraic Geometry.
%\newblock In T.M. Fomby and R.~Carter Hill, editors, 
{\em Combinatorial Convexity and Algebraic Geometry.}, Springer-Verlag, New York, Inc., 1996.




\bibitem{TFukui2000}
T.~Fukui.
%\newblock Combinatorial Convexity and Algebraic Geometry.
%\newblock In T.M. Fomby and R.~Carter Hill, editors, 
{\em Introduction to Toric Modifications With an Application to Real Singularities.} Department Of Mathematics, Faculty Of Science, Saitama University, 255 Shimo-Okubo, Urawa 338-8570, Japan, pag. 96-114, 2000.
\bibitem{MHenkWeismantel1964}
M.~Henk, R.~Weismantel.
%\newblock Resolution Of Singularities Of An Algebraic Variety Over A Field Of Zero Characteristic.
%\newblock In T.M. Fomby and R.~Carter Hill, editors, 
{\em On Hilbert Bases Of Polyhedral Cones.} Mathematical Foundations of Computer Science, 1999.




\bibitem{HHironaka1964}
H.~Hironaka.
\newblock Resolution Of Singularities Of An Algebraic Variety Over A Field Of Zero Characteristic.
%\newblock In T.M. Fomby and R.~Carter Hill, editors, 
{\em Annals Of Mathematics.} Vol.79, pp.109-326, 1964.






\bibitem{ASPoznyak2009}
A.S.~Poznyak.
%\newblock Combinatorial Convexity and Algebraic Geometry.
%\newblock In T.M. Fomby and R.~Carter Hill, editors, 
{\em Advanced Mathematical Tools for Automatic Control Engineers; Vol. 2, Stochastic Techniques.} Elsevier, Oxford, UK, 2009.
\bibitem{ASPoznyakSanchezYu2001}
A.S.~Poznyak, E.N.~Sanchez and W.~Yu.
%\newblock Combinatorial Convexity and Algebraic Geometry.
%\newblock In T.M. Fomby and R.~Carter Hill, editors, 
{\em Differential Neural Networks For Robust Nonlinear Control: Identification, State Estimation and Trajectory Tracking} World Scientific, Farrer Road, Singapore 912805, 2001.


\bibitem{BSturmfels1996}
B.~Sturmfels.
\newblock Grobner Bases and Convex Polytopes.
%\newblock In T.M. Fomby and R.~Carter Hill, editors, 
{\em American Mathematical Society.} Providence, RI, 1996.

\bibitem{SWatanabe12001}
S.~Watanabe.
\newblock Algebraic Geometry Of Learning Machines With Singularities And Their Prior Distributions.
%\newblock In T.M. Fomby and R.~Carter Hill, editors, 
{\em Journal Of Japanese Society  For Artificial Intelligence.} Vol. 16, No. 2; Pages 308-315(2001). 
\bibitem{SWatanabe22001}
S.~Watanabe.
\newblock Algebraic Information Geometry For Learning Machines With Singularities.
%\newblock In T.M. Fomby and R.~Carter Hill, editors, 
{\em Neural Networks.} Vol. 14 Issue 8, pages 1049-1060, January 2001. 

\bibitem{SWatanabe32001}
S.~Watanabe.
\newblock Training And Generalization Error Of Learning Machines With Algebraic Singularities.
%\newblock In T.M. Fomby and R.~Carter Hill, editors, 
{\em IEICE Transactions.} Vol. J84-A, No.1, pp.99-108, Jan. 2001.
\bibitem{SWatanabe42001}
S.~Watanabe.
\newblock Algebraic Geometrical Methods For Hierachical Learning Machines.
%\newblock In T.M. Fomby and R.~Carter Hill, editors, 
{\em Neural Networks.} Vol. 14 Issue 8, pages 1049-1060, Oct. 2001.
\bibitem{SWatanabe52001}
S.~Watanabe.
\newblock Algebraic Analysis For Non-Identifiable Learning Machines.
%\newblock In T.M. Fomby and R.~Carter Hill, editors, 
{\em Neural Computation.} Vol. 13, No. 4, pp.899-933, 2001.

\bibitem{SWatanabe62001}
S.~Watanabe.
\newblock Learning Efficiency Of Redundant Neural Networks In Bayesian Estimation.
%\newblock In T.M. Fomby and R.~Carter Hill, editors, 
{\em IEEE Transactions On Neural Networks.} Vol.12, No.6, pp.1475-1486, 2001.
\bibitem{SWatanabe72001}
S.~Watanabe.
\newblock Learning Efficiency Of Redundant Neural Networks In Bayesian Estimation.
%\newblock In T.M. Fomby and R.~Carter Hill, editors, 
{\em IEEE Transactions On Neural Networks.} Vol.12, No.6, pp.1475-1486, 2001.

\bibitem{SWatanabeYamazaki2002}
S.~Watanabe and K~Yamazaki.
\newblock A Probabilistic Algorithm to Calculate The Learning Curves Of Hierachical Learning Machines With Singularities.
%\newblock In T.M. Fomby and R.~Carter Hill, editors, 
{\em Trans. On IEICE.} Vol. J85-D-II, No.3, pp.363-372, Mar. 2002.
\bibitem{SWatanabe2005}
S.~Watanabe.
\newblock Algebraic Geometry Of Singular Learning Machines And Symmetry Of Generalization And Training Errors.
%\newblock In T.M. Fomby and R.~Carter Hill, editors, 
{\em Neurocomputing.} Vol. 67, Pages 198-213, August 2005.

\bibitem{SWatanabe2019}
S.~Watanabe.
\newblock Almost All Learning Machines Are Singular.
%\newblock In T.M. Fomby and R.~Carter Hill, editors, 
{\em Invited Paper in FOCI2007.} 2009.
\bibitem{SWatanabe2009}
S.~Watanabe.
\newblock Algebraic Geometry And Statistical Learning Theory.
%\newblock In T.M. Fomby and R.~Carter Hill, editors, 
{\em Cambridge Monographs On Applied And Computational Mathematics.} Cambridge CB2 8RU, U.K, 2009.




\bibitem{KYamazakiWatanabe2003}
K.~Yamazaki and S.~Watanabe.
\newblock Singularities In Mixture Models And Upper Bounds Of Stochastic Complexity.
%\newblock In T.M. Fomby and R.~Carter Hill, editors, 
{\em Neural Networks.} Vol. 16, Issue 7, pages 1029-1038, September 2003.

\bibitem{KYamazakiWatanabe2004}
K.~Yamazaki, M.~Aoyagui and S.~Watanabe.
\newblock Stochastic Complexity And Newton Diagram.
%\newblock In T.M. Fomby and R.~Carter Hill, editors, 
{\em International Symposium On Information Theory And Its Applications.} Parma Italy, pp.10-13, Oct. 2004. 

\end{thebibliography}
\end{document}